\providecommand{\U}[1]{\protect\rule{.1in}{.1in}}
\newtheorem{theorem}{Theorem}
\newtheorem{claim}[theorem]{Claim}
\newtheorem{lemma}[theorem]{Lemma}
\newtheorem{proposition}[theorem]{Proposition}
\newenvironment{proof}[1][Proof]{\noindent\textbf{#1.} }{\ \rule{0.5em}{0.5em}}
\begin{document}

\title{\textbf{Metastability of Queuing Networks with Mobile Servers\footnotetext{This
work was initiated in 2010 at the Newton Institute in Cambridge, UK,
as part of the semester on
Stochastic Networks. It then developed thanks to a CNRS France-Russia
international grant (Mobilit\'e, Dynamique et Th\'eorie des Files
d'Attente to 1,3 and 5). 
The core of the research was then carried out in France, Russia
and the USA: in France through grants to 1, in the framework of the Labex Archimede
(ANR-11-LABX-0033) and of the A*MIDEX project (ANR-11-IDEX-0001-02),
funded by the \textquotedblleft Investissements d'Avenir" French
Government programme managed by the French National Research Agency (ANR); in
Russia through a grant of the Russian
Foundation for Sciences (project No. 14-50-00150) to 3; in the USA
through a grant of the Simons 
Foundation (Grant No. 197982) to 2. All these supports are gratefully
acknowledged.}}}
\author{
F. Baccelli$^{2,5}$,
A. Rybko$^{3}$,
S. Shlosman$^{4,1,3}$ and
A. Vladimirov$^{3}$
\\{ }
\vspace{.2cm}
\\{\small $^{1}$Aix Marseille Universit\'{e}, Universit\'{e} de Toulon, 
CNRS, CPT, UMR 7332, France}
\\{\small $^{2}$UT Austin, Department of Mathematics, USA}
\\{\small $^{3}$Institute of the Information Transmission Problems, RAS, Moscow, Russia }
\\{\small $^{4}$Skolkovo Institute of Science and Technology, Moscow, Russia }
\\{\small $^{5}$INRIA Paris, France}
}
\maketitle

\begin{abstract}
We study symmetric queuing networks with moving servers and FIFO service
discipline. The mean-field limit dynamics demonstrates unexpected behavior
which we attribute to the metastability phenomenon. Large enough finite
symmetric networks on regular graphs are proved to be transient for
arbitrarily small inflow rates. However, the limiting non-linear Markov
process possesses at least two stationary solutions. The proof of transience
is based on martingale techniques.
\end{abstract}
\noindent
MCS: 60G10, 60G44, 60K25, 60K35, 60J25, 37A60. 

\section{Introduction}

In this paper we consider networks with moving servers. The setting is the
following: the network is living on a finite or countable graph $G=\left(
V,E\right)  $, at every node $v\in V$ of which one server $s$ is located at
any time. For every server, there are two incoming flows of customers: the
exogenous customers, who come from the outside, and the transit customers, who
come from some other servers. Every customer $c$ coming into the network
(through some initial server $s\left(  c\right)  $) is assigned a destination
$D\left(  c\right)  \in V$ according to some randomized rule. If a customer
$c$ is served by a server located at $v\in V,$ then it jumps to a server at
the node $v^{\prime}\in V,$ such that \textrm{dist}$\left(  v^{\prime
},D\left(  c\right)  \right)  =$\textrm{dist}$\left(  v,D\left(  c\right)
\right)  -1,$ thereby coming closer to its destination. If there are several
such $v^{\prime},$ one is chosen uniformly. There the customer $c$ waits {in
the FIFO queue} until his service starts. If a customer $c$ completes his
service by the server located at $v,$ and it so happens that \textrm{dist}%
$\left(  v,D\left(  c\right)  \right)  $ is $1$ or $0$, the customer is
declared to have reached its destination and leaves the network.

The important feature of our model is that the servers of our network are
themselves moving over the graph $G.$ Namely, we suppose that any two servers
$s,s^{\prime}$ located at adjacent nodes of $G$ exchange their positions as
the clock associated to the edge rings. The time intervals between the rings
of each alarm clock are i.i.d. exponential with rate $\beta$. When this
happens, each of the two servers brings all the customer waiting in its buffer
or being served, to the new location. In particular, it can happen that after
such a swap, the distance between the location of the customer $c$ and its
destination $D\left(  c\right)  $ increases (at most by one). We assume that
the service times of all customers at all servers are i.i.d. exponential with
rate $1$.

The motivation for this model comes from opportunistic multihop routing in
mobile wireless networks. Within this context, the servers represent mobile
wireless devices. Each device moves randomly on the graph $G$ which represents
the phase space of device locations. The random swaps represent the random
mobile motions on this phase space. Each node $v\in G$ of the phase space
generates an exogenous traffic (packetized information) with rate $\lambda
_{v}$ corresponding to the exogenous customers alluded to above. Each such
packet has some destination, which is some node of $G$. In opportunistic
routing (see Volume 2 of \cite{BB}), each wireless device adopts the following
simple routing policy: any given packet scheduled for wireless transmission is
sent to the neighboring node which is the closest to the packet destination.
The neighbor condition represents in a simplified way the wireless
constraints. It implies a multihop route in general. This routing policy is
the most natural one to use in view of the lack of knowledge of future random
swaps. For details on this motivation, the reader may refer to the literature
on mobile ad hoc networks and that on delay-tolerant networks. To the best of
our knowledge, the present paper is the first mathematical attempt to analyze
queuing within this mobile wireless framework.

In this paper we study the following types of graphs:

\begin{enumerate}
\item Finite graphs $G$: we consider the cyclic graph $C_{K}=\mathbb{Z}%
^{1}/K\mathbb{Z}^{1}$ and the toric graph $\mathcal{T}_{KL}=\mathbb{Z}%
^{2}/K\mathbb{Z}^{1}\oplus L\mathbb{Z}^{1},$ as well as general connected
$g$-regular graphs (i.e. graphs where every vertex has $g$ adjacent edges).

\item Mean-field graphs $G^{N}$: for a graph $G=\left(  V,E\right)  $ we
denote by $G^{N}$ a graph with the node set $V\times\left\{
1,2,...,N\right\}  ;$ two nodes $\left(  v^{\prime},n^{\prime}\right)  $ and
$\left(  v^{\prime\prime},n^{\prime\prime}\right)  $ are connected by an edge
iff $\left(  v^{\prime},v^{\prime\prime}\right)  \in E.$

\item Infinite graphs (like $\mathbb{Z}^{1}$ and $\mathbb{Z}^{2}$).

\item Limit graphs $G^{\infty}=\lim_{N\rightarrow\infty}G^{N}.$ The resulting
limiting networks can be analyzed using the theory of Non-Linear Markov
Processes (NLMP).
\end{enumerate}

The first two classes of graphs represent the type of wireless networks
alluded to above. The interest in mean-field versions of the last two types is
both of mathematical and practical nature. The mathematical interest of the
mean-field version of a network is well documented. There are also practical
motivations for analyzing such networks: their properties are crucial for
understanding the long-time behavior of finite size networks.

The results we obtain depend on the graph and look somewhat surprising. First
of all, we find that for finite regular graphs, the network is transient once
the diameter of the graph is large enough. For example, consider the network
on the graph $C_{K}$ with Poisson inflows with rate $\lambda>0$ at all nodes,
exponential service times with rate $1,$ FIFO discipline and node swap rate
$\beta>0.$ Then for all $K\geq K\left(  \lambda,\beta\right)  $, the queues at
all servers tend to infinity as time grows. In words this means that the
network is unstable for any $\lambda,$ however small it is -- once the network
is large enough. The same holds, probably, for the network on $\mathbb{Z}%
^{1},$ but we do not prove it here.

The same picture takes place for graphs $G^{N}$ with $N$ finite. However, the
limiting picture, for $N=\infty,$ is different: the corresponding NLM
processes on $C_{K}$ and $\mathbb{Z}^{1}$ have stationary distributions,
provided $0<\lambda\leq\lambda_{cr}\left(  K,\beta\right)  ,$ with
$0<\lambda_{cr}\left(  K,\beta\right)  <\infty$ for all $K\leq\infty.$
Moreover, for all $\lambda<\lambda_{cr}$ there are at least two different
stationary distributions, see Sect. \ref{FT} for more details.

On the other hand, the general convergence result of \cite{BRS} claims the
convergence of the networks on $G^{N}$ to the one on $G^{\infty}$ as
$N\rightarrow\infty,$ which seem to contradict to the statements above. The
explanation of this `contradiction' is that the convergence in \cite{BRS}
holds only on finite time intervals $\left[  0,T\right]  .$ That is, for any
$T$ there exists a value $N=N\left(  T\right)  $, such that the network on
$G^{N}$ is close to the limiting network on $G^{\infty}$ for all $t\in\left[
0,T\right]  ,$ provided $N\geq N\left(  T\right)  .$ Putting it differently,
the $G^{N}$ network behaves like the limiting $G^{\infty}$ network -- and
might even look as a stationary process -- for quite a long time, depending on
$N,$ but eventually it departs from such a regime and gets into the divergent
one. Clearly, the picture we have is an instance of metastable behavior. We
believe that more can be said about the metastable phase of our networks,
including the formation of critical regions of servers with oversized queues,
in the spirit of statistical mechanics, see e.g. \cite{SS}, but we will not
elaborate here on that topic.

\section{Finite networks}

{ This section starts with a detailed description of the methodology for
proving the instability of finite networks. This is done on the special case
of cyclic networks. We then discuss the extension to the mean-field versions
of cyclic networks and to toric networks. The general results, which bear on
connected $d$-regular graphs, are considered last. }

\subsection{The cyclic network}

\label{SS:cycn}

We start with the cyclic graph $C_{K}=\mathbb{Z}^{1}/K\mathbb{Z}^{1}.$ We use
the notation $C_{K}=(V_{K},E)$, where $V_{K}=\{1,\dots,K\}$ and
$E=\{(1,2),\dots,(K-1,K),(K,1)\}$. For future simplicity we take $K$ to be odd.

We study a continuous-time Markov process on a countable state $Q,$ related to
the graph $C_{K}$. Namely,
\[
Q=\{q^{v}:v\in V_{K}\}=(V_{K}^{\ast})^{V_{K}},
\]
where $V_{K}^{\ast}$ is the set of all finite words in the alphabet $V_{K}$,
including the empty word $\varnothing$.

The queue $q^{v}\in V_{K}^{\ast}$ at a server located at $v\in V_{K}$ consists
of a finite $\left(  \geq0\right)  $ number of customers which are ordered by
their arrival times (FIFO service discipline) and are marked by their
destinations which are vertices of the graph $C_{K}$. Since the destination of
the customer is its only relevant feature, in our notations we sometime will
identify the customers with their destinations.

\subsubsection{Dynamics}

\label{ssdyn}

Let us introduce the continuous-time Markov process ${\mathcal{M=M}}\left(
t\right)  $ with the state space $Q$. Let $h_{v}$ be the length of the queue
$q^{v}$ at node $v$. We have $q^{v}=\left\{  q_{1}^{v},\dots,q_{h_{v}}%
^{v}\right\}  $ if $h_{v}>0$ and $q^{v}=\varnothing$ if $h_{v}=0$.

The following events may happen in the process ${\mathcal{M}}$.

An arrival event at node $v$ changes the queue at this node. If the newly
arrived customer has node $w$ for its destination, then the queue changes from
$q^{v}$ to $q^{v}\oplus w$, that is, to $\left\{  q_{1}^{v},\dots,q_{h_{v}%
}^{v},w\right\}  $ if $h_{v}>0$ or from $\varnothing$ to $\left\{  w\right\}
$ if $h_{v}=0$.

In this paper we consider the situation where each exterior customer acquires
its destination at the moment of first arrival to the system, in a
translation-invariant manner: the probability to get destination $w$ while
arriving to the network at node $v$ depends only on $w-v$ mod $K$. The case
$w=v$ is not excluded. We thus have the rates $\lambda_{v,w},$ $v,w\in C_{K},$
where $\lambda_{v,w},$ depends only on $w-v$ mod $K$, and the jump from
$q^{v}$ to $q^{v}\oplus w,$ corresponding to the arrival to $v$ of the
exterior customer with final destination $w$ happens with the rate
$\lambda_{v,w}.$ We introduce the rate $\lambda_{v}$ of exterior customers to
queue $v$ as
\begin{equation}
\lambda_{v}=\sum_{w}\lambda_{v,w}. \label{21}%
\end{equation}
According to our definitions, $\lambda_{v}=\lambda$ does not depend on $v$.

Each node is equipped with an independent Poisson clock with parameter $1$
(the service rate). As it rings, the service of customer $q_{1}^{v}$ is
completed, provided $h_{v}>0$; nothing happens if $h_{v}=0$. In the former
case the queue at node $v$ changes from $q^{v}$ to
\[
q_{-}^{v}=\left\{  q_{2}^{v},\dots,q_{h_{v}}^{v}\right\}
\]
(we also define $\varnothing_{-}=\varnothing$) and immediately one of the two
things happen: either the customer $q_{1}^{v}$ leaves the network, or it jumps
to one of the two neighboring queues, $q^{v\pm1}$. The customer $q_{1}^{v}$
leaves the network only if its current position, $v,$ is at distance $\leq1$
from its destination, i.e. iff $q_{1}^{v}=v-1,$ $v,$ or $v+1.$ This is just
one of many possible choices we make for simplicity. Otherwise it jumps to the
neighboring vertices $w=v\pm1,$ which is the closest to its destination, i.e.
to the one which satisfy: $\mathrm{dist}(w,q_{1}^{v})=\mathrm{dist}%
(v,q_{1}^{v})-1$ (there is a unique such $w\in V_{K}$ since we assume $K$ to
be odd. The case of even $K$ requires small changes).

The last type of events is the swap of two neighboring servers. Namely, there
is an independent Poisson clock at each edge $uv\in E$ of $C_{K},$ with rate
$\beta>0$. As it rings, the queues at the vertices $u$ and $v$ swap their
positions, that is,
\[
q^{v}(t_{+})=q^{u}(t),\qquad q^{u}(t_{+})=q^{v}(t).
\]

\subsubsection{Submartingales}

Here we introduce some martingale technique that will be used for the proof of
transience of ${\mathcal{M}}$ for $K$ large enough. To begin with, we label
the $K$ servers by the index $k=1,...,K$; this labelling will not change
during the evolution. Together with the original continuous-time Markov
process ${\mathcal{M}}(t)$ we will consider the embedded discrete time process
$M(n)$, which is the value of ${\mathcal{M}}(t)$ immediately after the $n$-th
event. The state of the process $M$ consists of the states of all $K$ servers
and all their locations.

The general theorem below will be applied to the quantities $X_{n}^{k},$ which
are, roughly speaking, the lengths of the queues at the servers $k,$
$k=1,...,K,$ of the process $M(\Lambda n)$. The integer parameter
$\Lambda=\Lambda(K,\lambda,\beta)$ will be chosen large enough, so that, in
particular, after time $\Lambda$, the locations of the servers are well mixed
on the graph $C_{K},$ and the joint distribution of their location on $V_{K}$
is {close to uniform on the set of permutations on $V_{K}$}. Below, we aim to
prove that the conditional expectations of all the differences $X_{n+1}%
^{k}-X_{n}^{k}$ are positive. We start with the following theorem.

\begin{theorem}
\label{T1} Let ${\mathcal{F}}={\mathcal{F}}_{n}$, $n=0,1,\dots$, be a
filtration and let $X_{n}^{k}$, $k=1,\dots,K$, be a finite family of
non-negative integer-valued submartingales adapted to ${\mathcal{F}},$ such
that for all $k=1,\dots,K$, and all $n=0,1,\dots$, the following assumptions hold:

\begin{enumerate}
\item[(1)] For some $\rho>0$ the inequality
\begin{equation}
{\mathbb{E}}_{\mathcal{F}_{n}}(X_{n+1}^{k}-X_{n}^{k})\geq\rho\label{01}%
\end{equation}
holds whenever $X_{n}^{k}>0$.

\item[(2)] The increments are bounded by a constant $R$:
\begin{equation}
|X_{n+1}^{k}-X_{n}^{k}|\leq R\quad\text{a.s.} \label{02}%
\end{equation}

\end{enumerate}

Then there exists an initial state $(X_{0}^{1},\dots, X_{0}^{K})$ such that,
with positive probability, $X_{n}^{k}\rightarrow+\infty$ as $n\rightarrow
+\infty$ for all $k=1,\dots,K.$
\end{theorem}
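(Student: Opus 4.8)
The plan is to show that if all $K$ queues start at a common large value $N$, then with positive probability none of them ever reaches $0$, and that on this event each $X_n^k$ grows linearly in $n$. The driver of the non-return estimate will be an exponential supermartingale built from the drift hypothesis (1) and the boundedness hypothesis (2).

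First I would fix a small $\theta>0$ and analyze $Z_n^k=e^{-\theta X_n^k}$. Writing $\Delta=X_{n+1}^k-X_n^k$, the elementary inequality $e^{-y}\le 1-y+y^2$, valid for $|y|\le 1$, gives for $\theta\le 1/R$
$$\mathbb{E}_{\mathcal{F}_n}\!\left(e^{-\theta\Delta}\right)\le 1-\theta\,\mathbb{E}_{\mathcal{F}_n}(\Delta)+\theta^2R^2\le 1-\theta\rho+\theta^2R^2,$$
where the last step uses (1) on $\{X_n^k>0\}$ and (2). Choosing $\theta=\min(1/R,\ \rho/R^2)$ makes the right-hand side $\le 1$, so $Z_n^k$ is a supermartingale while $X_n^k>0$, that is, up to the hitting time $\tau^k=\inf\{n:X_n^k=0\}$.

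Second, I would stop the process at $\tau^k$. The stopped process $Z_{n\wedge\tau^k}^k$ is a genuine supermartingale bounded by $1$, so $\mathbb{E}(Z_{n\wedge\tau^k}^k)\le e^{-\theta N}$. Since $X_{\tau^k}^k=0$ forces $Z_{\tau^k}^k=1$, Fatou's lemma yields $\mathbb{P}(\tau^k<\infty)\le e^{-\theta N}$. A union bound over $k=1,\dots,K$ then gives $\mathbb{P}(A)\ge 1-Ke^{-\theta N}$, where $A=\{\tau^k=\infty\text{ for all }k\}$ is the event that no queue ever empties; taking $N>\theta^{-1}\log K$ makes $\mathbb{P}(A)>0$, and this fixes the required initial state $(X_0^1,\dots,X_0^K)=(N,\dots,N)$.

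Finally, on $A$ every conditional drift is genuinely $\ge\rho$ at every step, so I would use the Doob decomposition $X_n^k=N+\mathcal{N}_n^k+\sum_{j<n}\mathbb{E}_{\mathcal{F}_j}(X_{j+1}^k-X_j^k)$, where $\mathcal{N}_n^k$ is the martingale of centered increments, with steps bounded by $2R$. Azuma--Hoeffding together with Borel--Cantelli give $\mathcal{N}_n^k=o(n)$ almost surely, while on $A$ the predictable part is $\ge\rho n$; hence $X_n^k\to+\infty$ for all $k$ on $A$ intersected with the almost-sure event $\{\mathcal{N}_n^k=o(n)\ \forall k\}$, which has probability $\mathbb{P}(A)>0$. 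The main obstacle is precisely this last step: one cannot simply condition on $A$ and re-apply the drift hypothesis, since conditioning on non-absorption can distort the martingale structure. The resolution is that the law-of-large-numbers estimate for $\mathcal{N}_n^k$ holds on the whole probability space, using only the bounded increments, and is then intersected with $A$ to obtain the desired divergence.
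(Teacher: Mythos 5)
Your proof is correct, and it takes a genuinely different route from the paper's in its second half. The paper transforms each $X_n^k$ through the bounded increasing map $h_i=(1-\alpha^i)/(1-\alpha)$ — which, since $h_{X}=H\bigl(1-e^{-\theta X}\bigr)$ with $\theta=\log(1/\alpha)$, encodes exactly the same exponential-moment computation as your supermartingale $e^{-\theta X_n^k}$ — and then concludes via the martingale convergence theorem applied to the bounded submartingales $Y_n^k$, ruling out absorption by the observation that any limit vector supported on the lower boundary of the cube has coordinate sum at most $K-1$, whereas $\mathbb{E}\sum_k Y_n^k\geq\sum_k Y_0^k>K-1$. You instead quantify: optional stopping and Fatou give $\mathbb{P}(\tau^k<\infty)\leq e^{-\theta N}$, a union bound gives $\mathbb{P}(A)\geq 1-Ke^{-\theta N}$, and Azuma--Hoeffding plus Borel--Cantelli upgrade the conclusion on $A$ from mere divergence to linear growth $X_n^k\geq \rho n-o(n)$. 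Your version buys an explicit lower bound on the success probability and a growth rate, at the cost of a slightly longer argument; the paper's version is softer and shorter but leaves the verification that $Y_n^k$ remains a submartingale (the analogue of your Taylor-expansion step) as an assertion. Your closing remark about why one must not condition on $A$ before invoking the drift hypothesis — and why it suffices that the Azuma estimate holds on the whole space and is then intersected with $A$ — correctly identifies and resolves the one genuinely delicate point in this route.
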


In order to prove the theorem we begin with an auxiliary lemma.

\begin{lemma}
\label{L1} Let $\mathcal{Y}^{k}=\{Y_{n}^{k}:n=0,1,\dots$, $k=1,\dots,K$, be a
finite family of submartingales adapted to the same filtration ${\mathcal{F}}$
and such that $Y_{n}^{k}\in\lbrack0,1]$ for all $k,n$. Suppose also that for
any $\varepsilon>0$ there exists a $\delta>0$ such that, for all $k$ and $n$,
{
\[
{\mathbb{E}}_{\mathcal{F}_{n}}(Y_{n+1}^{k}-Y_{n}^{k})>\delta\quad
\text{on}\quad0<Y_{n}^{k}<1-\varepsilon.
\]
} Suppose that the initial vector $Y_{0}\in A=[0,1]^{K}$ is deterministic and
satisfies the condition
\begin{equation}
\sum_{k=1}^{K}Y_{0}^{k}>K-1. \label{E1}%
\end{equation}
Then, with positive probability, $Y_{n}^{k}\rightarrow1$ as $n\rightarrow
\infty,$ for all $k=1,\dots,K$.
\end{lemma}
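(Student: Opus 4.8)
The plan is to recast everything in terms of the deficits $Z_n^k = 1 - Y_n^k \in [0,1]$. Since each $Y_n^k$ is a submartingale, each $Z_n^k$ is a supermartingale, and so is the sum $S_n = \sum_{k=1}^K Z_n^k = K - \sum_{k} Y_n^k$, which is non-negative. Hypothesis (\ref{E1}) says precisely that $S_0 = K - \sum_k Y_0^k < 1$. First I would apply Doob's maximal inequality to the non-negative supermartingale $(S_n)$: with threshold $1$,
\[
\mathbb{P}\Big(\sup_{n \geq 0} S_n \geq 1\Big) \leq \mathbb{E}[S_0] = S_0 < 1,
\]
so the event $E = \{\sup_n S_n < 1\}$ has probability at least $1 - S_0 > 0$. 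On $E$ one has $Z_n^k \leq S_n < 1$ for every $k$ and $n$, i.e. $Y_n^k > 0$ for all $n$: on this positive-probability event no coordinate ever touches $0$.

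Next I would control the time each coordinate spends in the interior region through the Doob decomposition $Y_n^k = M_n^k + A_n^k$, where $M^k$ is a martingale with $M_0^k = Y_0^k$ and $A^k$ is the predictable non-decreasing compensator with increments $A_{n+1}^k - A_n^k = \mathbb{E}_{\mathcal{F}_n}(Y_{n+1}^k - Y_n^k) \geq 0$. Because $Y_n^k \leq 1$ one gets $\mathbb{E}[A_n^k] = \mathbb{E}[Y_n^k] - Y_0^k \leq 1 - Y_0^k$, hence $\mathbb{E}[A_\infty^k] \leq 1$ and $A_\infty^k < \infty$ almost surely. Fixing $\varepsilon > 0$ and the corresponding $\delta$, the compensator increment exceeds $\delta$ on the $\mathcal{F}_n$-measurable event $\{0 < Y_n^k < 1 - \varepsilon\}$, so
\[
A_\infty^k \;\geq\; \delta \sum_{n \geq 0} \mathbf{1}\{0 < Y_n^k < 1 - \varepsilon\}.
\]
Finiteness of $A_\infty^k$ then forces $Y_n^k$ to lie in $(0, 1-\varepsilon)$ only finitely often, almost surely. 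Intersecting this over $\varepsilon = 1/j$, $j \geq 1$, and over $k = 1, \dots, K$ produces an almost sure event $B$.

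Finally I would combine the two events. On $E \cap B$, fix $k$ and $j$: for all large $n$ the coordinate $Y_n^k$ lies outside $(0, 1 - 1/j)$ (by $B$) yet is strictly positive (by $E$), whence $Y_n^k \geq 1 - 1/j$. Thus $\liminf_n Y_n^k \geq 1 - 1/j$ for every $j$, and letting $j \to \infty$ gives $Y_n^k \to 1$, simultaneously for all $k$. Since $\mathbb{P}(B) = 1$ and $\mathbb{P}(E) > 0$, the event $E \cap B$ has positive probability, which is the claim. The crux — and the main obstacle — is that positivity of the drift in the interior does not by itself prevent a coordinate from sliding down to $0$; what rescues the argument is the interplay of two complementary facts: the global maximal inequality confines the total deficit below $1$ and so keeps every coordinate strictly away from $0$ on a fixed positive-probability event, while the finiteness of each compensator rules out a coordinate lingering in the interior. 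Neither ingredient is sufficient on its own.
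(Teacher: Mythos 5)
Your proof is correct, and it takes a genuinely different route from the paper's. The paper first invokes the Martingale Convergence Theorem to get a.s. limits $Y^{k}=\lim_{n}Y_{n}^{k}$, then argues that the drift hypothesis forbids the limit from charging any interval $(\alpha,\beta)\subset(0,1)$ (if it did, $\mathbb{E}[(Y_{n+1}^{k}-Y_{n}^{k})\mathbb{I}_{\{Y_{n}^{k}\in(\alpha,\beta)\}}]$ would stay bounded away from $0$, contradicting convergence), so the limit vector lies in $\{0,1\}^{K}$; finally it rules out the event that some coordinate limit is $0$ by comparing $\mathbb{E}\sum_{k}Y_{n}^{k}\geq\sum_{k}Y_{0}^{k}>K-1$ with the bound $\sum_{k}V^{k}\leq K-1$ valid for any vector with a zero coordinate. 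You replace both halves: the "no lingering in the interior" step is handled by the Doob decomposition and the a.s. finiteness of the compensator $A_{\infty}^{k}$ (which dominates $\delta$ times the occupation time of $(0,1-\varepsilon)$), and the "no coordinate reaches $0$" step is handled pathwise by the maximal inequality for the non-negative supermartingale $S_{n}=K-\sum_{k}Y_{n}^{k}$, whose initial value is below $1$ by (\ref{E1}). Both proofs use (\ref{E1}) in essentially the same way — it says the initial total deficit is less than $1$ — but your version avoids the convergence theorem and the dominated-convergence/weak-convergence bookkeeping of the paper's interior argument, is fully self-contained, and yields the explicit quantitative bound $\mathbb{P}(Y_{n}^{k}\rightarrow1\ \forall k)\geq\sum_{k}Y_{0}^{k}-(K-1)$. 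The trade-off is that you need the (standard but unstated in the paper) supermartingale maximal inequality and the Doob decomposition, whereas the paper leans only on bounded-submartingale convergence.
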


\begin{proof}
Since each submartingale $Y_{n}^{k}$ is bounded, there is a limit
$\lim_{n\rightarrow\infty}Y_{n}^{k}=Y^{k}$ almost surely for all $k$, see the
Martingale Convergence Theorem in \cite{D}. { Let us first show that the limit
vector $Y=(Y^{k})$ has its support on the union of the `maximal' vertex
$(1,\dots,1)$ of the cube $A$ and the `lower boundary' $B$ of $A$: $B=\left\{
a:\min_{k=1,\dots,K}a_{k}=0\right\}  $. Indeed, if $Y$ had parts of its
support on the complement $C$ of this union in $A$, then there would exist a
$k$ and $0<\alpha<\beta<1$ such that, $P(Y^{k}\in(\alpha,\beta))>0$. This
would imply that for some $\rho>0$,
\begin{align*}
{\mathbb{E}}\left[  (Y_{n+1}^{k}-Y_{n}^{k})\mathbb{I}_{\left\{  Y_{n}^{k}%
\in(\alpha,\beta)\right\}  }\right]   &  = {\mathbb{E}}\left[  {\mathbb{E}%
}_{{\mathcal{F}}_{n}} \left[  (Y_{n+1}^{k}-Y_{n}^{k})\mathbb{I}_{\left\{
Y_{n}^{k}\in(\alpha,\beta)\right\}  }\right]  \right] \\
&  \ge{\mathbb{E}}\left[  {\mathbb{E}}_{{\mathcal{F}}_{n}} \left[
\rho\mathbb{I}_{\left\{  Y_{n}^{k}\in(\alpha,\beta)\right\}  }\right]  \right]
\\
&  = \rho{\mathbb{P}} (Y_{n}^{k}\in(\alpha,\beta)) \to_{n\to\infty} a>0.
\end{align*}
But this contradicts the fact that
\[
\lim_{n\to\infty} {\mathbb{E}} \left[  (Y_{n+1}^{k}-Y_{n}^{k})\mathbb{I}%
_{\left\{  Y_{n}^{k}\in(\alpha,\beta)\right\}  }\right]  =0,
\]
which follows from the convergence a.s. of $Y_{n}^{k}$ and the dominated
convergence theorem.}

To conclude the proof of the lemma, note that for any random vector
$V=(V^{k})$ with support in $B$,
\begin{equation}
\sum_{k=1}^{K}V^{k}\leq K-1. \label{E2}%
\end{equation}
But, by the submartingale property, for all $n=1,\ldots$,
\begin{equation}
{\mathbb{E}}\sum_{k=1}^{K}Y_{n}^{k}\geq\sum_{k=1}^{K}Y_{0}^{k}>K-1. \label{E3}%
\end{equation}
Inequalities (\ref{E1})-(\ref{E3}) rule out the option that $Y$ has its
support in $B$.
\end{proof}

Now, in order to derive Theorem \ref{T1} from Lemma \ref{L1}, we make the
following change of variables for submartingales $X_{n}^{k}$. For a positive
parameter $\alpha<1$ we define an `irregular lattice' $h_{i}\in{\mathbb{R}%
}_{+}$, by
\[
h_{0}=0,\ \ h_{i+1}=h_{i}+\alpha^{i},\quad i=0,1,\dots.
\]
We get $\lim_{i\rightarrow\infty}h_{i}=H=(1-\alpha)^{-1}<\infty$. Now, for
each $k=1,\dots,K$, we define the process $Y_{n}^{k}$ on the same filtration
${\mathcal{F}}$ by the relation
\[
Y_{n}^{k}(\omega)=h_{X_{n}^{k}(\omega)}.
\]
For $k=1,\dots,K$, the process $Y_{n}^{k}$ takes its values on the `lattice'
$\{h_{i}\}$. { It is easy to see that under the assumptions that $|X_{n+1}%
^{k}-X_{n}^{k}|\leq R$ and that ${\mathbb{E}}_{\mathcal{F}_{n}}(X_{n+1}%
^{k}-X_{n}^{k})\geq\rho$, for each $k$, $Y_{n}^{k}$ is still a submartingale,
provided $1-\alpha$ is small enough.}
Then the hypothesis of Lemma \ref{L1} holds (up to a constant factor $H$) and
Theorem \ref{T1} is proved.

\subsubsection{Transience}

Let us return to the process ${\mathcal{M}}\left(  t\right)  $. Suppose that
the parameters $\lambda>0$ and $\beta>0$ are fixed. We remind the reader that
our service rate is set to $1$.

\begin{theorem}
\label{T3} For each $\lambda>0$ and $\beta>0$, there exists $K^{\ast}%
\in{\mathbb{Z}}_{+}$ such that for any $K\geq K^{\ast}$ the process
${\mathcal{M}}$ is transient.
\end{theorem}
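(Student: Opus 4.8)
The plan is to deduce transience from Theorem \ref{T1}. First I would observe that $\mathcal{M}$ is irreducible on its natural state space, so it suffices to exhibit a single initial configuration from which, with positive probability, every queue grows without bound; Theorem \ref{T1} produces exactly such a configuration once its hypotheses are verified. Accordingly I set $X_n^k$ to be the length of the queue carried by the labelled server $k$, sampled along the embedded chain at the event-times $\Lambda n$, where $\Lambda=\Lambda(K,\lambda,\beta)$ is a large integer to be fixed. Because servers carry their entire queues through a swap, the length $X^k$ of a \emph{labelled} server changes only through external arrivals to its current location, its own service completions, and customers jumping in from a currently adjacent server; each elementary event alters it by at most one, so condition (2) of Theorem \ref{T1} holds with $R=\Lambda$.

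The substance of the argument is condition (1): a strictly positive conditional drift $\rho>0$ for every nonempty queue. I would choose $\Lambda$ large enough that the server-location process --- the interchange process on $V_K$ generated by the edge clocks --- is essentially mixed over one block, so that the environment seen by a tagged server is close to the fully symmetric one. In this symmetric regime the expected change of $X^k$ per unit time decomposes as $\lambda$ (external inflow) plus the in-jump rate from the two neighbours, minus the service rate $1$. Writing $q=q(K,\lambda,\beta)$ for the fraction of service completions that are terminal (customer within distance $1$ of its destination, hence leaving the network), exactly a fraction $1-q$ of completed services produce an out-jump to a neighbour; by the left-right symmetry of the mixed state the in-jump and out-jump flows across a server cancel, leaving a net drift per unit time equal to $\lambda-q$ up to a mixing error that can be made arbitrarily small by enlarging $\Lambda$.

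It then remains to show $q\to 0$ as $K\to\infty$, equivalently that the mean number of services per customer diverges. For a tagged customer let $d_t$ denote its distance to its destination: each of its services lowers $d_t$ by exactly one, while each swap it undergoes changes $d_t$ by $\pm1$ with (by symmetry) zero mean. Since the customer leaves only when $d_t\le 1$ and its typical initial distance on $C_K$ is of order $K/4$, an optional-stopping / Wald argument applied to the martingale part contributed by the swaps shows that the expected number of services before departure is at least of order $K$. Hence $q\le C/K$ for a constant $C=C(\beta)$, so for all $K>K^{\ast}(\lambda,\beta)$ one gets $\lambda-q\ge\lambda/2$, and multiplying by the time represented by $\Lambda$ events yields the uniform lower bound $\rho>0$ of condition (1). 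Theorem \ref{T1} then furnishes an initial state from which all $X_n^k\to\infty$ with positive probability, and transience of $\mathcal{M}$ follows.

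The step I expect to be the main obstacle is the rigorous verification of the positive drift, and in particular its \emph{uniformity}. The flow-cancellation argument is clean only in the exactly symmetric, well-populated regime; when a neighbouring server happens to be nearly empty the in-jump flow into server $k$ drops and the naive drift can turn negative. This is precisely why Theorem \ref{T1} only asserts the existence of a favourable initial state rather than transience from every state: one must start from (and show the dynamics preserves, with positive probability) a regime of large, comparable queues in which the symmetric flow balance is valid. Making the mixing error, the flow balance, and the estimate $q\le C/K$ simultaneously quantitative --- so that they combine into a single honest constant $\rho$ --- is the delicate part of the proof.
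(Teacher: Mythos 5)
Your overall architecture coincides with the paper's: reduce to Theorem \ref{T1} by sampling the labelled-server queue lengths along blocks of $\Lambda$ events, take $R=\Lambda$ for condition (2), and obtain the drift in condition (1) from the mixing of the server-location (interchange) process. Your flow-balance step (per-server in-jump rate equals per-server out-jump rate equals $1-q$ in the symmetric regime, hence drift $\lambda-q$) is a legitimate substitute for the paper's combinatorial Lemma \ref{L31}, which computes the stationary transit rate $p_K=(K-3)/K=1-3/K$ directly. The genuine gap is in your estimate of $q$. The Wald/optional-stopping argument on the distance process $d_t$ is the wrong mechanism, for two reasons. First, it makes the answer depend on the typical \emph{initial} distance to destination, which the model does not control: the destination law is an arbitrary translation-invariant kernel $\lambda_{v,w}$, and may well put all mass on $w=v$ (this is exactly the convention adopted in the NLMP section), in which case $\mathbb{E}[d_0]=0$ and your bound degenerates to $q\approx 1$. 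Second, the swap contribution to $d_t$ is not a mean-zero martingale: on $C_K$ the distance process reflects at $0$ and at the antipode $(K-1)/2$, and since a customer deep in a long queue undergoes very many swaps between consecutive services, these boundary terms are not negligible (your Wald heuristic predicts $\mathbb{E}[\#\text{services}]\approx K/4$ where the true answer is $K/3$). The paper's mechanism (Lemma \ref{L11}) is both simpler and correct: by the time a customer with large queue position is served, the location of its server has mixed to the uniform distribution on $V_K$, so the probability that it sits at one of the three exit nodes $D(c),D(c)\pm 1$ is $3/K$, \emph{independently of the destination law and of the initial distance}. This gives $q\to 3/K$ uniformly in the initial state, and hence the threshold $K^\ast=3/\lambda$.

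The second issue you correctly flag — that the symmetric flow balance fails when some neighbouring queue is short — is resolved in the paper not by an a priori preservation argument but by a modification of the chain: one freezes the process forever the first time any queue drops to length $\le\Lambda$ (formally, $X_n^k=M_{n\Lambda}^k-\Lambda$, absorbed at $0$). On the event $X_n^k>0$ all $K$ queues exceed $\Lambda$, so no server can empty during the next block of $\Lambda$ events, every server is busy throughout, and the full transit rate $1-3/K$ per server is available; condition (1) of Theorem \ref{T1} is only required on $\{X_n^k>0\}$, so this suffices. Without this (or an equivalent) device your drift bound is not uniform and Theorem \ref{T1} cannot be invoked.
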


{ The proof consists of the following steps:} first of all, we construct a
discrete time Markov chain ${\mathcal{D}}$ on the state space $Q$. To define
this chain, we start with the embedded Markov chain $M(n)$, defined earlier,
and then pass to the chain $M(\Lambda n)$, with the integer $\Lambda$ to be
specified later. To get the chain ${\mathcal{D=}}\left\{  {\mathcal{D}}%
_{n}\right\}  $, we modify the chain $M(\Lambda n)$ as follows: if for some
$n$ some of the $K$ queues are {less than or equal to $\Lambda$, we make all
queues to be of length exactly $\Lambda$, and then freeze the process at this
point forever.} Otherwise we do no changes.
We start the process ${\mathcal{D}}$ at some configuration $Q_{0}$ with all
queues longer than $\Lambda$. We then prove the following statement: if
$\Lambda$ is large enough, the process $\mathcal{D}$ at any given server $k$
is a submartingale satisfying the conditions of Theorem \ref{T1}, with respect
to the filtration defined by the discrete-time Markov chain $M(n)$ (the
individual queue length processes are clearly adapted to this filtration).
This will then complete the proof thanks to Theorem \ref{T1}.

{ These steps are embodied in a series of lemmas.}

Consider the following function $\pi(t)$ of the process ${\mathcal{M}}$
defined in Subsection \ref{ssdyn}. At each $t\ge0$, let $\pi(t)$ be the
current permutation of indices of the $K$ servers with respect to the $K$
nodes and let $v(i,t)$ denote the current position of server $i$.

\begin{lemma}
\label{L4} As $t\to\infty$,

\begin{enumerate}
\item {(1)} the distribution of $\pi(t)$ converges to the uniform distribution
on the set $S_{K}$ of all permutations;

\item {(2)} the distribution of $v(i,t)$ converges to the uniform distribution
on $\{1,\dots,K\}$ as $t\to\infty$.
\end{enumerate}
\end{lemma}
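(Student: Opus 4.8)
The plan is to exploit a decoupling that makes this lemma essentially a statement about a random walk on the symmetric group. The crucial observation is that the server-position process is \emph{autonomous}: the swap events are triggered by the edge clocks alone, which are independent Poisson processes independent of all arrival and service clocks, and a swap merely exchanges two entire queues without regard to their contents. Consequently the permutation $\pi(t)$, and hence the positions $v(i,t)$ read off from it, evolves as a continuous-time Markov chain on the finite state space $S_{K}$ with no feedback whatsoever from the queue lengths. I would make this autonomy explicit first, since it is the only conceptual point; once granted, the claim reduces to the convergence of a finite irreducible continuous-time chain to its stationary law.

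Next I would write down the generator. To each edge $e\in E$ associate the transposition $\tau_{e}$ of its two endpoints; when the clock of $e$ rings (at rate $\beta$) the current permutation $\pi$ is updated to $\tau_{e}\cdot\pi$, so that the generator acts by $(\mathcal{L}f)(\pi)=\beta\sum_{e\in E}\bigl(f(\tau_{e}\cdot\pi)-f(\pi)\bigr)$. Two elementary properties then force convergence to the uniform measure. First, irreducibility: the adjacent transpositions $\tau_{(1,2)},\dots,\tau_{(K-1,K)}$ are the standard Coxeter generators of $S_{K}$ and already generate the whole group, so every permutation is reachable and, $S_{K}$ being finite, the chain is irreducible. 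Second, uniform stationarity: each $\tau_{e}$ is an involution, so the jump rate from $\pi$ to $\tau_{e}\cdot\pi$ equals the jump rate from $\tau_{e}\cdot\pi$ back to $\pi$; the rate matrix is symmetric, the uniform distribution on $S_{K}$ is therefore reversible, and it is the unique stationary law. By the standard convergence theorem for irreducible finite continuous-time Markov chains, the law of $\pi(t)$ converges to the uniform distribution on $S_{K}$, which is part~(1). Part~(2) is then an immediate marginal: under a uniformly random permutation the image of any fixed server index $i$ is uniform on $\{1,\dots,K\}$, so as the law of $\pi(t)$ approaches uniformity the law of $v(i,t)$ approaches the uniform law on the nodes.

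I expect no serious obstacle here; the content is light and the argument is standard. The only points demanding genuine care are the clean justification that the server dynamics is decoupled from the queues (so that $\pi(t)$ really is a Markov chain in its own right), and the verification that the chosen transpositions exhaust $S_{K}$. It is worth noting that working in continuous time removes any periodicity subtlety, which is precisely why the parity of $K$ plays no role in this particular lemma, even though it was imposed elsewhere for the routing rule.
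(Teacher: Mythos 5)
Your proof is correct and follows essentially the same route as the paper's: both observe that $\pi(t)$ is an autonomous continuous-time Markov chain, identify it as a random walk on $S_K$ generated by the adjacent transpositions, and conclude via connectivity and reversibility that the uniform distribution is the limit, with part (2) as a marginal. Your version merely makes explicit the generator, the Coxeter-generation argument for irreducibility, and the absence of periodicity issues in continuous time, all of which the paper leaves implicit.
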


\begin{proof}
{ Note that $\{\pi(t)\}$ is a continuous time Markov process on its own. To
best represent this Markov process,} let us introduce a graph structure on the
permutation group $S_{K}$. Consider all the transpositions $\tau\in S_{K}$
corresponding to the exchanges of pairs of neighboring servers. We connect two
permutations $\pi^{\prime},\pi^{\prime\prime}$ by an edge iff $\pi^{\prime
}=\pi^{\prime\prime}\tau$ for some $\tau.$

The resulting graph on $S_{K}$ is connected -- because $G_{K}$ is connected {
and each of its nodes has the same degree.} The process of migration of
servers is a uniform random walk on this graph, that is, a reversible process.
Hence, as $t\rightarrow\infty$, the distribution of permutations converges to
the uniform one, for all initial states. The assertions of the lemma follow.
\end{proof}

\begin{lemma}
\label{L11} For all initial states $Q_{0}$, { the probability that a customer
with position $H>0$ in some queue leaves the network after being served at
this queue} tends to $3/K$ as $H\rightarrow\infty$, uniformly in $Q_{0}.$
\end{lemma}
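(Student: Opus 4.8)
The plan is to reduce the statement to the mixing result of Lemma \ref{L4}. Fix the customer, its destination $D\in V_K$, and the server $i$ in whose queue it currently sits at position $H$. Since destinations are fixed vertices of $C_K$, when this customer is eventually served, say at the (random) instant $T$, it leaves the network if and only if the server $i$ carrying it is then at distance $\le 1$ from $D$, i.e. $v(i,T)\in\{D-1,D,D+1\}=:S_D$. Because $K$ is odd and $\ge 3$, the set $S_D$ consists of exactly three distinct nodes, so the quantity to estimate is $\mathbb{P}(v(i,T)\in S_D)$, and the goal is to show it tends to $|S_D|/K=3/K$ as $H\to\infty$, uniformly in $Q_0$.

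First I would describe the service instant $T$. Our customer occupies a position in server $i$'s queue, so this queue stays nonempty until the customer is served; hence each ring of the node currently hosting server $i$ removes one customer from the head, and $T$ is the time of the $H$-th such ring. Since every node carries a rate-$1$ service clock and exactly one server sits at each node, the aggregate service process experienced by server $i$ is, by the memorylessness of the exponential, a rate-$1$ Poisson process irrespective of how $i$ migrates; equivalently, the model is distributionally identical to one in which each server carries its own rate-$1$ clock. Consequently $T=\sum_{j=1}^{H}\xi_j$ with $\xi_j$ i.i.d. $\mathrm{Exp}(1)$, so $T$ has a $\mathrm{Gamma}(H,1)$ law with mean $H$; in particular $T\to\infty$ in probability as $H\to\infty$, with a law that does not depend on $Q_0$.

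The key step is an independence argument: the service clocks (attached to nodes, all of rate $1$) are independent of the edge swap clocks that drive the migration $\{v(i,t)\}$. Conditioning on the entire swap trajectory, the service process of server $i$ is still rate-$1$ Poisson, with a conditional law that does not depend on that trajectory; hence $T$ is independent of the random walk $\{v(i,t)\}_{t\ge 0}$. I would then invoke Lemma \ref{L4}(2): the finite-state migration chain converges to the uniform distribution on $V_K$ at a geometric rate governed by its spectral gap, uniformly over initial states. Thus for every $\varepsilon>0$ there is a $t_0$ such that the total variation distance between the law of $v(i,t)$ and the uniform law is below $\varepsilon$ for all $t\ge t_0$, uniformly in $Q_0$.

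To conclude I would condition on $T$ and use this independence:
\[
\mathbb{P}(v(i,T)\in S_D)=\mathbb{E}\left[\,g(T)\,\right],\qquad g(t):=\mathbb{P}(v(i,t)\in S_D).
\]
On the event $\{T\ge t_0\}$ the value $g(T)$ lies within $\varepsilon$ of $3/K$, while $\mathbb{P}(T<t_0)\to 0$ as $H\to\infty$; letting $H\to\infty$ and then $\varepsilon\to 0$ yields the claim, with uniformity in $Q_0$ inherited from the fact that neither the law of $T$ nor the mixing bound depends on the initial configuration. The main obstacle is precisely the passage from mixing at large \emph{deterministic} times to mixing at the \emph{random} time $T$: this is exactly what the independence of $T$ and $\{v(i,t)\}$, combined with the uniform-in-$t$ (for $t\ge t_0$) total variation bound from Lemma \ref{L4}, is designed to overcome.
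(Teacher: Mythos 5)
Your proof is correct and follows essentially the same route as the paper: the waiting time grows with $H$, so by Lemma \ref{L4} the server's position is nearly uniform on $V_K$ at the service instant, and the exit event corresponds to the three nodes $D(c)-1, D(c), D(c)+1$. The paper states this in two sentences; you have merely supplied the (valid) details on the Gamma law of the service time and its independence from the swap trajectory, which justify evaluating the mixed chain at the random time $T$.
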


\begin{proof}
As the waiting time of the customer tends to infinity with $H\rightarrow
\infty$, the distribution of its server on $V_{K}$ tends to the uniform one on
$C_{K}$ (see Lemma \ref{L4}). In order for the customer $c$ to exit the
network, the last server visited by $c$ has to be located at this moment at
one of the three nodes: $D(c) +1,$ $D(c)$ or $D(c)-1$.
\end{proof}

Hence, for all customers in the initial queues whose positions are at least
$H$, the mean chance of exit approaches $3/K$ as $H\rightarrow\infty$ and the
rate of this approach does not depend on the particularities of the initial
state $Q_{0}$, but only on $H$.

The next remark is that if a customer with initial position $H$, with $H$
large, is served and then jumps to a different server, then the index $j$ of
that server is distributed almost uniformly over the remaining $K-1$ indices.
This fact follows from Lemma \ref{L11}. Again, the rate of convergence is
independent of $Q_{0}$ because the servers swap their positions independently
of anything else. So we have established:

\begin{lemma}
\label{L12} The probability that a customer with position $H$ on server $i$
jumps to server $j$ at the completion of its service in this server tends to
$1/\left(  K-1\right)  $ as $H\rightarrow\infty$, uniformly in $i\ne j$ and in
the initial states $Q_{0}\in Q$.
\end{lemma}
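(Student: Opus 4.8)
The plan is to track the tagged customer $c$ sitting at position $H$ in the queue of the labelled server $i$, and to identify the random time $T$ at which $c$ reaches the head of that queue and is served. The first thing I would establish is that $T$ is \emph{independent} of the server-migration process $\pi(t)$. Since every node carries an independent rate-$1$ service clock and server $i$ occupies exactly one node at each instant, the stream of service completions experienced by the queue of server $i$ is a rate-$1$ Poisson process no matter how the servers migrate; moreover, customers entering that queue (exogenous or transit) are appended behind $c$ and never overtake it under FIFO. Consequently $c$ is served at the $H$-th such completion, so $T$ has a $\mathrm{Gamma}(H,1)$ law, is independent of the swap clocks driving $\pi(t)$, satisfies $T\to\infty$ as $H\to\infty$, and its distribution does not depend on $Q_0$.

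Because $T$ is independent of $\pi(t)$ and $T\to\infty$, the law of the permutation $\pi(T)$ observed at the service instant converges to the uniform law on $S_K$ by Lemma \ref{L4}; in particular, conditioned on server $i$ sitting at node $v$, the servers occupying the remaining $K-1$ nodes form an asymptotically uniform bijection onto the remaining labels. At time $T$ the customer $c$ either exits the network (when $v(i,T)\in\{D(c)-1,D(c),D(c)+1\}$) or jumps to the unique neighbour $w=v\pm 1$ closer to $D(c)$, and the label it then lands on is $j=\pi(T)^{-1}(w)$.

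It then remains to compute $\mathbb{P}(c\text{ jumps to }j)$ and normalise. For each of the $K-3$ positions $v$ that lead to a jump the target node $w(v)$ is determined, and the events $\{v(i,T)=v\}$ are disjoint; using that the ordered pair (position of $i$, position of $j$) is asymptotically uniform over pairs of distinct nodes, each summand tends to $\tfrac{1}{K(K-1)}$, whence $\mathbb{P}(c\text{ jumps to }j)\to\tfrac{K-3}{K(K-1)}$. Dividing by the probability of a jump, which tends to $\tfrac{K-3}{K}$ by Lemma \ref{L11}, yields the limit $\tfrac{1}{K-1}$, the same value for every $j\ne i$. Uniformity in $i\ne j$ follows from the symmetry of $C_K$, and uniformity in $Q_0$ from the fact that both the convergence rate of $\pi(t)$ to the uniform law and the law of $T$ are insensitive to the initial configuration.

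The step I expect to require the most care is the independence of the service instant $T$ from the migration process $\pi(t)$: it is precisely this that lets me replace the deterministic large time in Lemma \ref{L4} by the random service time while retaining convergence to the uniform permutation. The argument hinges on all service rates being equal, so that the tagged server's departure process is rate-$1$ Poisson regardless of its trajectory across nodes; were the service rates node-dependent, $T$ and $\pi(T)$ would become correlated and the clean factorization above would fail.
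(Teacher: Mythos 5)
Your proposal is correct and follows essentially the same route as the paper, which disposes of this lemma in two sentences by invoking Lemma \ref{L4}/\ref{L11} together with the independence of the swap process from everything else. Your write-up merely makes explicit what the paper leaves implicit --- in particular the key point that the tagged customer's service instant $T$ is $\mathrm{Gamma}(H,1)$ and independent of the migration process, which justifies evaluating the uniform limit of $\pi(t)$ at the random time $T$ --- and the final count $\frac{K-3}{K(K-1)}\big/\frac{K-3}{K}=\frac{1}{K-1}$ matches the intended argument.
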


We need a third combinatorial lemma. We start with some definitions before
formulating and proving this lemme. Let $\left\{  u,v\right\}  $ be an ordered
pair of elements of $V_{K}$. We define the map $T$ from the set of all such
pairs to $V_{K}\cup\left\{  \ast\right\}  ,$ by
\[
T\left\{  u,v\right\}  =\left\{
\begin{array}
[c]{cc}%
w &
\begin{array}
[c]{c}%
\text{for }w\text{ defined by }\left\vert u-w\right\vert =1,\ \left\vert
v-w\right\vert =\left\vert u-v\right\vert -1,\text{ }\\
\text{provided }\left\vert u-v\right\vert >1,
\end{array}
\\
\ast & \text{otherwise.}%
\end{array}
\right.
\]
For $K$ odd, the map $T$ is well-defined. In case $T\left\{  u,v\right\}  =w$,
we say that a customer transits through $w$ on his way from $u$ to $v$.

Let $D:V_{K}\rightarrow V_{K}$ be an arbitrary map. We want to compute the
quantity%
\begin{equation}
p_{K}=\frac{1}{K!}\sum_{\pi\in S_{K},i\in V_{K}}\mathbb{I}_{\left\{  T\left\{
\pi\left(  i\right)  ,D\left(  i\right)  \right\}  =\pi\left(  j\right)
\right\}  }, \label{331}%
\end{equation}
where $S_{K}$ is the symmetric group, while $j$ is an element of $V_{K}.$ That
is, $p_{K}$ is the continuous time transit rate through server $j$ in
stationary regime -- when the locations of servers are uniformly distributed
on $S_{K}$ -- provided all servers have infinite queues. Of course, this rate
does not depend on $j$.

\begin{lemma}
\label{L31}%
\[
p_{K}=\frac{K-3}{K}.
\]

\end{lemma}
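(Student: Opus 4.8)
The plan is to evaluate the double sum defining $p_K$ by a direct counting argument, organizing the permutations $\pi$ according to the value $\pi(i)$ of the server whose head-of-line customer we track. First I would fix the distinguished index $j\in V_K$ and split the outer sum over $i\in V_K$. Since $T\{u,v\}$, whenever it is a vertex $w$, satisfies $|u-w|=1$ and hence $w\neq u$, the diagonal term $i=j$ can never contribute, because $T\{\pi(j),D(j)\}=\pi(j)$ is impossible. Thus only the $K-1$ indices $i\neq j$ matter, and it suffices to compute, for each such $i$, the count
\[
N_i=\#\{\pi\in S_K : T\{\pi(i),D(i)\}=\pi(j)\}.
\]

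For fixed $i\neq j$, write $d=D(i)$ (a fixed vertex) and classify permutations by the value $a=\pi(i)$. The equation $T\{a,d\}=\pi(j)$ asks that $\pi(j)$ be the unique neighbour of $a$ lying one step closer to $d$; in particular this forces $\mathrm{dist}(a,d)>1$ (otherwise $T\{a,d\}=\ast$ is not a vertex), and when $\mathrm{dist}(a,d)>1$ holds the value of $\pi(j)$ is \emph{determined}. I would then count the admissible $a$: the vertices within distance $1$ of $d$ are exactly $d-1,d,d+1$, three distinct vertices since $K$ is odd, so precisely $K-3$ vertices $a$ satisfy $\mathrm{dist}(a,d)>1$. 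For each such $a$ the induced pair $(\pi(i),\pi(j))=(a,T\{a,d\})$ is a legitimate partial permutation (its two entries are distinct because $T\{a,d\}\neq a$), and the number of permutations extending it is $(K-2)!$; distinct values of $a$ give disjoint sets of permutations, so there is no double counting. Hence $N_i=(K-3)(K-2)!$, \emph{independently of the vertex} $d=D(i)$.

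It then remains to assemble the pieces: summing over the $K-1$ indices $i\neq j$ and dividing by $K!$ gives
\[
p_K=\frac{(K-1)\,(K-3)(K-2)!}{K!}=\frac{(K-3)(K-1)!}{K!}=\frac{K-3}{K}.
\]
The arithmetic is short; the genuinely load-bearing point — and the one I would take most care to justify — is that the admissible count equals $K-3$ regardless of $d$. This is exactly what makes $p_K$ independent of the arbitrary destination map $D$, and it rests on two facts already at hand: that for odd $K$ the next-hop vertex $T\{a,d\}$ is uniquely defined (there is no antipodal tie), and that exactly three vertices lie within distance $1$ of any fixed $d$. Once these are secured, the classification by $a=\pi(i)$ collapses the sum into the elementary identity above.
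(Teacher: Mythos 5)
Your proof is correct, and it takes a genuinely more direct route than the paper's. The paper partitions $S_{K}$ into classes $A_{\pi}$ of size $K(K-1)$ generated by cyclic moves and restricted cyclic moves, and then counts transit events class by class, conditioning on the position $i_{1}=\pi'(j)$ of the \emph{target} server: for each customer $l\neq j$ there are $K-3$ admissible target positions (all of $V_K$ except $D(l)$ and the two vertices at maximal distance $\tfrac{K-1}{2}$ from it, which are never a next hop), each realized by exactly one permutation in the class. You instead condition on the position $a=\pi(i)$ of the \emph{source} server, observing that the $K-3$ vertices at distance greater than $1$ from $D(i)$ are exactly the admissible sources and that each determines $\pi(j)=T\{a,D(i)\}$, leaving $(K-2)!$ free extensions; the two exclusion sets are different (yours is $\{d-1,d,d+1\}$, the paper's is $\{D(l),D(l)\pm\tfrac{K-1}{2}\}$) but both have cardinality $3$, which is why both yield $K-3$. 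Your global count over all of $S_K$ avoids the coset machinery entirely and is cleaner for the cycle; what the paper's decomposition buys is a template that it reuses verbatim for the torus $\mathcal{T}_{KL}$ in the appendix, where the analogous classes carry the product structure of parallels and meridians. Your observation that the diagonal term $i=j$ vanishes because $T\{u,v\}\neq u$ is a point the paper passes over silently, and it is worth making explicit as you do.
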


\begin{proof}
Without loss of generality we can take $j=1$. Instead of performing the
summation in $\left(  \ref{331}\right)  $ over whole group $S_{K},$ we
partition $S_{K}$ into $\left(  K-2\right)  !$ subsets $A_{\pi}$, and perform
the summation over each $A_{\pi}$ separately. If the result will not depent on
$\pi,$ we are done. Here $\pi\in S_{K},$ and, needless to say, for $\pi
,\pi^{\prime}$ different we have either $A_{\pi}=A_{\pi^{\prime}}$ or $A_{\pi
}\cap A_{\pi^{\prime}}=\varnothing.$

Let us describe the elements of the partition $\left\{  A_{\pi}\right\}  .$ So
let $\pi$ is given, and the string $i_{1},i_{2},i_{3},...,i_{l},i_{l+1}%
,...,i_{K}$ is the result of applying the permutation $\pi$ to the string
$1,2,...,K.$ Then we include into $A_{\pi}$ the permutation $\pi,$ and also
$K-1$ other permutations, which correspond to the cyclic permutations, e.g. we
add to $A_{\pi}$ the strings $i_{K},i_{1},i_{2},i_{3},...,i_{l},i_{l+1},...,$
$i_{K-1},i_{K},i_{1},i_{2},i_{3},...,i_{l},i_{l+1},...,$ and so on. We call
these transformations `cyclic moves'. Now with each of $K$ permutations
already listed we include into $A_{\pi}$ also $K-2$ other permutations, where
the element $i_{1}$ does not move, and the rest of the elements is permuted
cyclically, i.e., for example from $i_{K-1},i_{K},i_{1},i_{2},i_{3}%
,...,i_{l},i_{l+1},...,$ we get $i_{K},i_{2},i_{1},i_{3},...,i_{l}%
,i_{l+1},...,i_{K-1},$ $i_{2},i_{3},i_{1},...,i_{l},i_{l+1},...,i_{K-1}%
,i_{K},$ and so on. We call these transformations `restricted cyclic moves'.
The main property of thus defined classes of configurations is the following:
Let $a\neq b\in\left\{  1,2,...,K\right\}  $ be two arbitrary indices, and
$l\in\left\{  2,...,K\right\}  $ be an arbitrary index, different from $1.$
Then in every class $A_{\pi}$ there exists exactly one permutation
$\pi^{\prime},$ for which $i_{1}=a$ and $i_{l}=b.$

Given $\pi,$ take the customer $l\neq1\left(  =j\right)  ,$ and its
destination, $D\left(  l\right)  .$ If we already know the position $i_{1}$ of
customer $1$ on the circle $C_{K},$ then in the class $A_{\pi}$ there are
exactly $K-1$ elements, each of them corresponds to a different position of
the server $l$ on $C_{K}.$ If it so happens that $i_{1}=D\left(  l\right)  ,$
then for no position of the server $l$ the transit from $l$ through $i_{1}$
happens. The same also holds if $i_{1}=\left(  D\left(  l\right)  +\frac
{K-1}{2}\right)  \operatorname{mod}K$ or $i_{1}=\left(  D\left(  l\right)
+\frac{K+1}{2}\right)  \operatorname{mod}K.$ For all other $K-3$ values of
$i_{1}$ the transit from $l$ through $i_{1}$ happens precisely for one
position of $l$ (among $K-1$ possibilities). Totally, within $A_{\pi}$ we have
$\left(  K-1\right)  \left(  K-3\right)  $ transit events. Since $\left\vert
A_{\pi}\right\vert =K\left(  K-1\right)  ,$ the lemma follows.

\end{proof}

\vspace{.3cm}

\noindent\textbf{Proof of Theorem \ref{T3}.} { We recall that $M_{n}^{k}$
denotes the queue length of server $k$ at the $n$-th transition of the chain
$M(t)$ and that $\{{\mathcal{F}}_{n}\}$ denotes the natural filtration of the
chain $\{M(n)\}$.}

We now define the submartingales $\{X_{n}^{k}\}$ and show that they satisfy
all the properties of Theorem \ref{T1}.

{ Let $\Lambda$ be a positive integer and for all $k$, let
\[
X_{n}^{k} = M_{n \Lambda}^{k}- \Lambda,\quad n=0,1,\ldots
\]
as long as the R.H.S. is positive, and $0$ from the first $n$ such that it is
less than or equal to 0 (see above). Clearly, $X_{n}^{k}\geq0$. We now show
that if $K$ and $\Lambda$ are both suitably large, then, for all $k$,
$\{X_{n}^{k}\}$ is a submartingale w.r.t. the filtration ${\mathcal{G}}%
_{n}={\mathcal{F}}_{\Lambda n}$, and in addition, Properties (1) and (2) of
Theorem \ref{T1} hold.}

{ On $X_{n}^{k}=0$, the submartingale property ${\mathbb{E}}_{{\mathcal{G}%
}_{n}} X_{n+1}^{k} \ge X_{n}^{k}$ is satisfied as $X_{n}^{k}=0$ implies
$X_{n+1}^{k}=0$.}

Relation (\ref{02}) is evidently satisfied with $R=\Lambda.$ Let us now check
(\ref{01}). Let us start the process $M$ at a configuration where all the
queue lengths are of the form $X_{0}^{k}+\Lambda$ with $X_{0}^{k}> 0$,
$k=1,...,K$. We want to show that { ${\mathbb{E}}_{{\mathcal{F}}_{0}%
}(M_{\Lambda}^{k}-M_{0}^{k})\geq\rho$, for some $\rho>0$, which will prove
(\ref{01}) and the submartingale property on $X_{n}^{k}>0$.} Let $H=H\left(
K\right)  $ be the time after which the distribution of the $K$ servers is
almost uniform on $C_{K},$ see Lemma \ref{L11}. Before this moment, we do not
know much about our network; we can nevertheless bound the lengths of the
queues $M_{H}^{k}$ from below by $M_{H}^{k}\geq M_{0}^{k}-H.$ After time $H$,
the probability that a customer leaving a server leaves the network is almost
$1/K,$ and the probabilities that it jumps to the left or the right are both
close to $\frac{K-1}{2K}.$ More precisely, by Lemma \ref{L31}, if $\Lambda$ is
large enough, after time $H$, the rate of arrival to every server is
approximately $\lambda+(K-3)/K,$ which is higher than the exit rate, $1,$
provided $K$ is large enough (namely, $K>K^{*}=3/\lambda$). Hence the expected
queue lengths in the process $M$ grow linearly in time, at least after time
$H$, which implies the existence of $\Lambda>0$ such that { $\mathbb{E}%
_{{\mathcal{F}}_{0}}\left(  M_{\Lambda}^{k}\right)  \geq M_{0}^{k}+\rho.$} So,
Theorem \ref{T1} applies. This completes the proof of Theorem \ref{T3}.
\quad$\blacksquare$

\subsection{{Mean-field version of the cyclic network}}

In this subsection, we analyze the mean field graph $C_{K}^{N}$ defined in the
introduction, where at each vertex $v\in C_{K}$ we now have $N$ servers. The
dynamics of the system is a modification of the case $N=1$ with the following
characteristics: the

\begin{itemize}
\item exogenous customers arrive to each node $(v,n)$ $v=1,...,K,$ $n=1,...,N$
with the rate $\lambda$;

\item the destination of an exogenous customer is a node $w$ in $C_{K}$ (and
not a node in $C_{K}^{N}$);

\item if a customer $c$ completes its service at a node $(v,n)$, then it
leaves the network in case its destination $D(c)$ is $v$ or $v\pm1;$ otherwise
it transits to the node $(\tilde{v},k)$, where $\tilde{v}$ is the node which
is the closest to $D(c)$ among $v+1$ and $v-1$, while $k$ is chosen uniformly
from the $N$ values $1,...,N$;

\item the two servers at locations $(v,n)$ and $(v+1,k)$ swap with rate
$\frac{\beta}{N}.$
\end{itemize}

The results in this case, as well as the proofs, are similar to those of
Subsection \ref{SS:cycn}, except for the analogue of Lemma \ref{L31}. Below we
define the corresponding ensemble and we formulate the analogous statement.

Two nodes $\left(  v,k\right)  $ and $\left(  v^{\prime},k^{\prime}\right)  $
of $C_{K}^{N}$ are connected by an edge iff $v$ and $v^{\prime}$ are connected
by an edge in $G$. Let $\left\{  \left(  u,k\right)  ,\left(  v,l\right)
\right\}  $ be an ordered pair of nodes of $C^{N}_{K}$. We define the
\textit{random }map $T$ from the set of all such pairs into the union
$V_{K}\times N\cup\left\{  \ast\right\}  ,$ by%
\[
T\left\{  \left(  u,k\right)  ,\left(  v,l\right)  \right\}  =\left\{
\begin{array}
[c]{cc}%
\left(  w,m\right)  \text{ with probability }\frac{1}{N} &
\begin{array}
[c]{c}%
\text{if }\left\vert u-v\right\vert >1,\\
w\text{ satisfies }\left\vert u-w\right\vert =1,\ \\
\left\vert v-w\right\vert =\left\vert u-v\right\vert -1,\text{ }%
\end{array}
\\
\ast & \text{otherwise.}%
\end{array}
\right.
\]
For $K$ odd, the map $T$ is well-defined. In case $T\left\{  \left(
u,k\right)  ,\left(  v,l\right)  \right\}  =\left(  w,m\right)  $ we say that
we have a transit of a customer through $\left(  w,m\right)  .$

We define a subgroup $\tilde{S}_{K}\subset S_{KN}$ of permutations of the
nodes of the graph $G\times N$ as the one generated by the transpositions
$\left(  u,k\right)  \leftrightarrow\left(  v,l\right)  ,$ where
$u\leftrightarrow v$ is a transposition from $S_{K},$ and $k,l$ are arbitrary.

Let $D:V_{K}\times N\rightarrow V_{K}$ be an arbitrary map. The analogue of
$p_{K}$ in (\ref{331}) is the quantity {
\[
p_{K,N}=\frac{1}{\left\vert \tilde{S}_{K}\right\vert } \sum_{\pi\in\tilde
{S}_{K},\left(  u,k\right)  \in V_{K}\times N} \sum_{(u,k)} \mathbb{P}\left(
T\left\{  \pi\left(  u,k\right)  ,D\left(  u,k\right)  \right\}  = \pi\left(
w,m\right)  \right)  .
\]
} By arguments similar to those of the last subsection, it is easy to show
that $p_{K,N}=\frac{K-3}{K}.$ Hence, the following theorem holds:

\begin{theorem}
For all $\lambda>0$, $\beta>0$ and $N\ge1$, and for all $K> K^{\ast}%
=3/\lambda$, the Markov process ${\mathcal{M}_{K,N}}$ is transient.
\end{theorem}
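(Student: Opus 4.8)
The plan is to reproduce the architecture of the proof of Theorem~\ref{T3} verbatim, now applied to the $KN$ servers of the mean-field network ${\mathcal{M}}_{K,N}$ instead of the $K$ servers of the cyclic network. Concretely, I would label the $KN$ servers once and for all, let $M(n)$ be the embedded jump chain of ${\mathcal{M}}_{K,N}$, fix a large integer $\Lambda=\Lambda(K,N,\lambda,\beta)$, and set $X_n^{k}=M_{n\Lambda}^{k}-\Lambda$ for each server $k=1,\dots,KN$ (frozen at $0$ from the first $n$ at which the right-hand side becomes non-positive), exactly as in the proof of Theorem~\ref{T3}. Since Theorem~\ref{T1} only asks for a \emph{finite} family of non-negative integer submartingales with a common positive drift $\rho$ in (\ref{01}) and uniformly bounded increments $R$ in (\ref{02}), its conclusion -- that with positive probability $X_n^{k}\to+\infty$ for every $k$ -- immediately yields transience of ${\mathcal{M}}_{K,N}$ as soon as those two hypotheses are verified for the $KN$-server family. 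Relation (\ref{02}) holds with $R=\Lambda$, and the submartingale property on $\{X_n^{k}=0\}$ is automatic from the freezing rule; all the content is in establishing (\ref{01}).

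For the drift I would establish the mean-field analogues of Lemmas~\ref{L4}, \ref{L11}, \ref{L12} and \ref{L31}. The server-migration process is again an autonomous continuous-time Markov chain, now a random walk on the permutation group $\tilde{S}_K$: since the swap transpositions $(u,k)\leftrightarrow(v,l)$ act along the edges of the connected graph with vertex set $V_K\times\{1,\dots,N\}$ and all carry the common rate $\beta/N$, the walk is reversible with uniform invariant measure, so its law converges to the uniform distribution on $\tilde{S}_K$ (analogue of Lemma~\ref{L4}); in particular the location of any fixed server becomes asymptotically uniform over the $K$ columns of $C_K$. Because a customer's destination is a \emph{column} $D(c)\in C_K$ and the exit rule depends only on the current column, the exit probability of a long-waiting customer still tends to $3/K$ (analogue of Lemma~\ref{L11}) and a transiting customer lands on a uniformly chosen server of the appropriate column (analogue of Lemma~\ref{L12}). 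Carrying these facts through the transit count gives the value $p_{K,N}=\frac{K-3}{K}$ already recorded above as the analogue of Lemma~\ref{L31}: the factor $1/N$ attached to landing on a prescribed server is exactly compensated by the $N$-fold growth, relative to the $N=1$ case, in the number of source servers whose customers funnel through each column, so the per-server transit rate is independent of $N$.

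With these ingredients the drift computation is identical to the one closing the proof of Theorem~\ref{T3}. Starting the chain from a configuration in which every queue exceeds $\Lambda$, I would bound $M_H^{k}\ge M_0^{k}-H$ up to the mixing time $H=H(K,N)$ of the migration process, and then argue that for $\Lambda$ large the mean rate of arrivals at each server after time $H$ is close to $\lambda+p_{K,N}=\lambda+\frac{K-3}{K}$, while the exit rate stays equal to $1$. The net drift is therefore positive precisely when $\lambda+\frac{K-3}{K}>1$, i.e. when $K>K^{\ast}=3/\lambda$, producing a uniform $\rho>0$ in (\ref{01}). This verifies both hypotheses of Theorem~\ref{T1} and establishes transience of ${\mathcal{M}}_{K,N}$ for every $N\ge1$.

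I expect the only genuine obstacle to be the time-scale bookkeeping underlying the mixing step, since this is where the parameter $N$ enters non-trivially. The total event rate of ${\mathcal{M}}_{K,N}$ grows with $N$ -- there are $KN$ service clocks and an effective swap rate of order $N$ per edge of $C_K$, the $N^{2}$ pairs each ringing at rate $\beta/N$ -- so the number $\Lambda$ of jumps of $M(n)$ needed to reach the near-stationary regime of server locations must be calibrated as a function of $N$ as well as of $K,\lambda,\beta$. One must check that a single finite $\Lambda$ simultaneously makes the locations well mixed (so that the analogues of Lemmas~\ref{L11}--\ref{L31} hold up to arbitrarily small errors) and makes the accumulated positive drift dominate the $O(H)$ loss incurred before mixing. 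This is exactly the tension already resolved in the proof of Theorem~\ref{T3}; since $K$, $N$ and the rates are fixed while $\Lambda$ is free, taking $\Lambda$ large enough settles it, and no new phenomenon arises from passing to the mean-field graph.
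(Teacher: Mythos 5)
Your proposal is correct and follows essentially the same route as the paper: the paper likewise reduces the mean-field case to the argument of Theorem~\ref{T3}, with the only substantive new ingredient being the computation $p_{K,N}=\frac{K-3}{K}$ (which the paper obtains via the subgroup $\tilde{S}_K$ and the random map $T$ assigning the landing replica uniformly with probability $1/N$, matching your compensation argument). Your additional remarks on the $N$-dependence of the event rate and the choice of $\Lambda$ address a point the paper leaves implicit, but introduce nothing beyond the paper's own scheme.
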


\subsection{{The toric network}}

In this subsection, the graph is $\mathcal{T}_{KL}=\left(  V_{KL},E\right)  $,
the discrete torus of size $K\times L$. We assume $K,L$ to be odd.

The dynamics of the network is a straightforward generalization of that of the
$C_{K}$ case. Again, the results and the proofs are similar, after we prove
the analog of Lemma \ref{L31}.

We can fix the labelling $\left(  1,1\right)  ,\left(  1,2\right)
,...,\left(  K,L\right)  $ on $\mathcal{T}_{KL}$; without loss of generality
we can take $j=\left(  1,1\right)  .$ But it is notationally more convenient
to introduce other coordinates on $\mathcal{T}_{KL}.$ Namely, we treat
$\mathcal{T}_{KL}$ as a product, $\mathcal{T}_{KL}=\left\{  -\frac{K-1}%
{2},...,\frac{K-1}{2}\right\}  \times\left\{  -\frac{L-1}{2},...,\frac{L-1}%
{2}\right\}  ,$ and, for our tagged element $j$, we now take $j=\left(
0,0\right)  $.

For every ordered pair $\left\{  u,v\right\}  , u, v \in V_{KL}$ such that
$\left\vert u-v\right\vert >1$, we define the set on next hop nodes from $u$
to $v$ as
\[
W\left(  u,v\right)  =\left\{  w\in V_{KL}:\left\vert u-w\right\vert
=1,\left\vert v-w\right\vert =\left\vert u-v\right\vert -1\right\}  .
\]
Clearly, $\left\vert W\left(  u,v\right)  \right\vert $ is either $1$ or $2.$
We define the \textit{random }map $T$ from the set of all ordered pairs
$\left\{  u,v\right\}  , u,v\in V_{KL}$ into the $V_{KL}\cup\left\{
\ast\right\}  $ by%
\[
T\left\{  u,v\right\}  =\left\{
\begin{array}
[c]{cc}%
w\text{ with probability }\frac{1}{\left\vert W\left(  u,v\right)  \right\vert
} &
\begin{array}
[c]{c}%
\text{if }\left\vert u-v\right\vert >1,\\
w\in W\left(  u,v\right)  ,
\end{array}
\\
\ast & \text{otherwise.}%
\end{array}
\right.
\]
In case $T\left\{  u,v\right\}  =w$ we say that we have a transit of a
customer through $w.$

Let $D:V_{KL}\rightarrow V_{KL}$ be an arbitrary map. { Let
\[
p_{KL}=\frac{1}{\left\vert S_{KL}\right\vert }\sum_{\pi\in S_{KL},u\in V_{KL}%
}\mathbb{P}\left(  T\left\{  \pi\left(  u\right)  ,D\left(  u\right)
\right\}  =\pi\left(  w\right)  \right)  .
\]
The proof of the following lemma is forwarded to the appendix. This lemma is
also a corollary of Lemma \ref{L55} below. }

\begin{lemma}
\label{lem:tor} For $K,L\geq3$
\[
p_{KL}=\frac{KL-5}{KL}.
\]

\end{lemma}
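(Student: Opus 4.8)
The plan is to bypass the explicit partition of the symmetric group used in Lemma \ref{L31} and instead exploit linearity together with the symmetry of the uniform measure on $S_{KL}$. Write $j=(0,0)$ for the tagged site, identified with the position $\pi(j)$ of the tagged server, and decompose $p_{KL}=\sum_{u\in V_{KL}}c_{u}$ into per-customer contributions
\[
c_{u}=\frac{1}{(KL)!}\sum_{\pi\in S_{KL}}\mathbb{P}\bigl(T\{\pi(u),D(u)\}=\pi(j)\bigr).
\]
The server $u=j$ contributes nothing, since any next hop lies at distance $1$ from the current site and so cannot equal $\pi(j)$; hence only the $KL-1$ servers $u\neq j$ matter. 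For a fixed such $u$, the summand depends on $\pi$ only through the pair of positions $\bigl(\pi(u),\pi(j)\bigr)$. I would first note that this pair is uniform over the $KL(KL-1)$ ordered pairs of distinct sites: exactly $(KL-2)!$ permutations realize any prescribed $\pi(u)=y$, $\pi(j)=x$ with $y\neq x$. Writing $d=D(u)$, this reduces $c_{u}$ to
\[
c_{u}=\frac{1}{KL(KL-1)}\sum_{\substack{y,x\in V_{KL}\\ y\neq x}}\mathbb{P}\bigl(T\{y,d\}=x\bigr).
\]

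The key simplification, which I expect to carry the whole argument, is that summing the transition kernel over the target site $x$ collapses the randomness built into $T$. When $|y-d|>1$, the value $T\{y,d\}$ lands on one of the $|W(y,d)|\in\{1,2\}$ next-hop nodes with total probability $1$, and every such node lies at distance $1$ from $y$ and hence differs from $y$; when $|y-d|\le 1$ one has $T\{y,d\}=\ast$. Therefore $\sum_{x\neq y}\mathbb{P}(T\{y,d\}=x)=\mathbb{I}_{\{|y-d|>1\}}$, regardless of whether the pair $\{y,d\}$ is ``equidistant'' (the case $|W|=2$, along a diagonal direction) or not. This is precisely the place where the apparent complication of the toric geometry, namely the two-fold ambiguity of the greedy next hop, becomes irrelevant: it is the natural obstacle one worries about, but it washes out under the summation over targets.

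It then remains to count the sources. On $\mathcal{T}_{KL}$ with $K,L\ge 3$ the ball of radius $1$ about $d$ consists of $d$ together with its four distinct neighbours $d\pm(1,0)$ and $d\pm(0,1)$, so it has exactly $5$ elements; this is the only point at which the hypothesis $K,L\ge 3$ is used. Hence $\sum_{y}\mathbb{I}_{\{|y-d|>1\}}=KL-5$, giving $c_{u}=\frac{KL-5}{KL(KL-1)}$ for every $u\neq j$, and in particular independently of $d=D(u)$, which is why the answer does not depend on the destination map $D$. Summing the $KL-1$ equal contributions yields
\[
p_{KL}=(KL-1)\cdot\frac{KL-5}{KL(KL-1)}=\frac{KL-5}{KL},
\]
as claimed. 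The identical computation, with the radius-$1$ ball on $C_{K}$ having $3$ elements, reproduces $p_{K}=\frac{K-3}{K}$, so this argument serves as a uniform and more transparent replacement for the group-partition method of Lemma \ref{L31}.
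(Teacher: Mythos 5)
Your proof is correct, and it takes a genuinely different route from the paper's. The paper's appendix proof partitions $S_{KL}$ into cosets $A_{\pi}$ of size $K(K-1)L(L-1)$ built from 2D cyclic moves and restricted cyclic moves along meridians and parallels, and then counts transit events case by case according to where $\varkappa(0,0)$ sits relative to the destination, handling the two-fold next-hop ambiguity explicitly with $2\times\frac12$ bookkeeping. You instead observe that the summand depends on $\pi$ only through the ordered pair $\bigl(\pi(u),\pi(j)\bigr)$, which is exactly uniform over distinct pairs under the uniform measure on $S_{KL}$, and that summing the kernel of $T$ over the target $x$ collapses the randomness: $\sum_{x\neq y}\mathbb{P}(T\{y,d\}=x)=\mathbb{I}_{\{|y-d|>1\}}$ irrespective of whether $|W(y,d)|$ is $1$ or $2$. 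This reduces everything to counting the radius-$1$ ball, which has $5$ points on $\mathcal{T}_{KL}$ when $K,L\geq3$. Your argument is shorter, avoids the delicate coset construction, and makes transparent both why the answer is independent of the destination map $D$ and why the same computation gives $p_{K}=\frac{K-3}{K}$ on the circle; it is in fact an exact combinatorial version of the heuristic stationary-regime argument the paper uses for general $g$-regular graphs in Lemma \ref{L55} (of which the toric case is the instance $g=4$). What the paper's coset method buys in exchange is an explicit exhibition of the symmetry classes, but for the purpose of computing $p_{KL}$ your route is cleaner and fully rigorous.
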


Hence, the following analogue of Theorem \ref{T3} holds.

\begin{theorem}
For each $\lambda>0$, $\beta>0$ and for each $K$ and $L$ such that
$KL>K^{\ast}=5/\lambda$, the process ${\mathcal{M}_{K,L}}$ is transient.
\end{theorem}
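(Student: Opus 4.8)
The plan is to mirror the argument used for Theorem~\ref{T3}, replacing the cyclic transit rate $p_K=(K-3)/K$ by the toric rate $p_{KL}=(KL-5)/(KL)$ supplied by Lemma~\ref{lem:tor}. First I would set up the embedded discrete-time chain: pass from the continuous-time process $\mathcal{M}_{K,L}$ to its jump chain $M(n)$, then to $M(\Lambda n)$ for an integer $\Lambda=\Lambda(K,L,\lambda,\beta)$ to be fixed later, and finally define the frozen chain by stopping the evolution the first time any of the $KL$ queues drops to length $\le\Lambda$ (declaring all queues to equal $\Lambda$ at that instant). As in the cyclic case, this freezing guarantees that the processes
\[
X_n^k = M_{n\Lambda}^k - \Lambda,\qquad k=1,\dots,KL,
\]
remain non-negative and integer-valued, and that on $\{X_n^k=0\}$ the submartingale inequality holds trivially.

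Next I would verify the two hypotheses of Theorem~\ref{T1}. Condition~(2) is immediate with $R=\Lambda$, since over $\Lambda$ jumps a single queue length can change by at most $\Lambda$. For condition~(1), the toric analogues of Lemmas~\ref{L4}, \ref{L11} and \ref{L12} ensure that the random walk of the $KL$ servers on the permutation group of $V_{KL}$ is irreducible and reversible (the transposition graph is connected because the torus is connected and regular), hence mixes to the uniform distribution; consequently there is a time $H=H(K,L)$ after which the server carrying any sufficiently long-delayed customer is almost uniformly distributed over $V_{KL}$. A customer exits only if its final server lies within distance~$1$ of its destination, i.e.\ at one of the $5$ nodes $D(c)$, $D(c)\pm e_1$, $D(c)\pm e_2$; this is precisely what is encoded by $p_{KL}$ in Lemma~\ref{lem:tor}.

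The drift computation then proceeds as before. Starting $M$ from a configuration with all queues of length $X_0^k+\Lambda>\Lambda$, I bound $M_H^k\ge M_0^k-H$ over the initial unmixed window, and for times past $H$ use Lemma~\ref{lem:tor} to conclude that the arrival rate to every server is approximately $\lambda+p_{KL}=\lambda+(KL-5)/(KL)$, against a departure rate of $1$. The net drift is therefore positive precisely when
\[
\lambda+\frac{KL-5}{KL}>1 \iff \lambda>\frac{5}{KL} \iff KL>\frac{5}{\lambda}=K^{\ast},
\]
which is the hypothesis of the theorem. Choosing $\Lambda$ large enough that the favorable linear growth after $H$ dominates the bounded loss incurred during $[0,H]$ yields $\mathbb{E}_{\mathcal{F}_0}(M_\Lambda^k)\ge M_0^k+\rho$ for some $\rho>0$, establishing~(\ref{01}). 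Theorem~\ref{T1} then furnishes an initial state from which all $KL$ queues diverge to $+\infty$ with positive probability, i.e.\ $\mathcal{M}_{K,L}$ is transient.

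I expect the only genuinely new work to be the combinatorial evaluation of $p_{KL}$, which is harder than in the cyclic case because the next-hop set $W(u,v)$ on the two-dimensional torus may contain two vertices, forcing the randomized choice in the map $T$; but that computation is exactly Lemma~\ref{lem:tor}, already established in the appendix, so the remaining argument is a routine transcription of the proof of Theorem~\ref{T3}.
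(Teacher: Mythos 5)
Your proposal is correct and follows essentially the same route as the paper: the authors likewise reduce the toric case to a verbatim repetition of the proof of Theorem~\ref{T3}, with the only new ingredient being the computation $p_{KL}=(KL-5)/(KL)$ of Lemma~\ref{lem:tor} (proved in the appendix), which yields the threshold $KL>5/\lambda$ exactly as in your drift calculation. Your observation that the two-element next-hop sets $W(u,v)$ are the sole genuinely new combinatorial difficulty matches the paper's treatment.
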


\subsection{{Regular graphs}}

Let us recall that a graph $G=\left(  V,E\right)  $ is called $g$-regular if
every vertex has $g$ edges adjacent to it. In this section we show that the
instability result established above actually holds for all connected and
$g$-regular graphs $G$. Clearly, the graphs $C_{K}$ and $\mathcal{T}_{KL}$ are
simple instances of such graphs.

Let us define $p_{G}$ analogously to the definition of {$p_{KL}$ in Lemma
\ref{lem:tor}:
\begin{equation}
p_{G}=\frac{1}{K!}\sum_{\pi\in S_{K},i\in V}\mathbb{P}\left\{  T\left\{
\pi\left(  i\right)  ,D\left(  i\right)  \right\}  =\pi\left(  j\right)
\right\}  , \label{332}%
\end{equation}
} where $K=\left\vert V\right\vert .$

If $G$ is connected and $g$-regular, then the symmetric nearest neighbor
random walk on it has the uniform measure as its unique stationary state.
Hence, lemmas \ref{L4}, \ref{L11} and \ref{L12} hold for $G.$ So the only step
needed is the following generalization of Lemma \ref{L31}:

\begin{lemma}
\label{L55}
\begin{equation}
p_{G}=\frac{\left\vert V\right\vert -\left(  g+1\right)  }{\left\vert
V\right\vert }. \label{88}%
\end{equation}

\end{lemma}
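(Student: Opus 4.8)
The plan is to evaluate the double sum defining $p_{G}$ in (\ref{332}) directly, by exchanging the order of summation and reducing the whole expression to a counting problem, rather than via the ad hoc partition of $S_{K}$ used in Lemma \ref{L31}. Set $K=|V|$ and fix the tagged server $j$. The first observation is that the term $i=j$ contributes nothing: whenever $T\{\pi(j),D(j)\}$ is not $\ast$, it is by definition a neighbor of $\pi(j)$ and hence differs from $\pi(j)$, so the event $T\{\pi(j),D(j)\}=\pi(j)$ never occurs.

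Next I fix an index $i\neq j$ and abbreviate $a=\pi(i)$, $b=\pi(j)$, $d=D(i)$. As $\pi$ ranges over $S_{K}$, the ordered pair $(\pi(i),\pi(j))$ is equidistributed over ordered pairs of distinct vertices, each pair being attained by exactly $(K-2)!$ permutations. Therefore
\[
\sum_{\pi\in S_{K}}\mathbb{P}\left\{  T\{\pi(i),D(i)\}=\pi(j)\right\}
=(K-2)!\sum_{\substack{a,b\in V\\ a\neq b}}\mathbb{P}\left\{  T\{a,d\}=b\right\}  .
\]
The key simplification is to sum over the target $b$ first. For fixed current node $a$ and destination $d$, the random value $T\{a,d\}$ is either $\ast$ (when $\mathrm{dist}(a,d)\leq 1$, i.e. the customer exits) or is distributed over the set of strictly-closer neighbors $w$ of $a$ (those with $\mathrm{dist}(w,d)=\mathrm{dist}(a,d)-1$); in the latter case its value is always a neighbor of $a$ and hence never equals $a$. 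Consequently
\[
\sum_{b\neq a}\mathbb{P}\left\{  T\{a,d\}=b\right\}
=\mathbb{P}\left\{  T\{a,d\}\neq\ast\right\}
=\mathbb{I}\left\{  \mathrm{dist}(a,d)\geq 2\right\}  ,
\]
so the whole quantity collapses to the number of starting positions $a$ from which the customer heading to $d$ transits rather than exits.

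The exit set is exactly $\{d\}$ together with the neighbors of $d$, which has $g+1$ elements by $g$-regularity (the graph being simple). Hence $\sum_{a\neq b}\mathbb{P}\{T\{a,d\}=b\}=K-(g+1)$, independently of the value $d=D(i)$. Summing over the $K-1$ indices $i\neq j$, each contributing $(K-2)!\,(K-(g+1))$, the double sum equals $(K-1)!\,(K-(g+1))$; dividing by $K!$ gives $p_{G}=(K-(g+1))/K=(|V|-(g+1))/|V|$, which is (\ref{88}). As a sanity check, $g=2$ recovers $(K-3)/K$ of Lemma \ref{L31} and $g=4$ recovers $(KL-5)/(KL)$ of Lemma \ref{lem:tor}.

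The only point requiring care — and precisely the reason the argument goes through for an arbitrary destination map $D$ — is the collapse of the randomness of $T$ when summing over the target $b$. Because every admissible next hop is a neighbor of the current node, the sum of transit probabilities over all targets reduces to the indicator that the customer does not exit, and the count $K-(g+1)$ of non-exiting positions is governed solely by the common degree $g$, hence is the same for every destination $d$. Connectedness, used earlier for the mixing statements of Lemmas \ref{L4}, \ref{L11} and \ref{L12}, plays no role in this purely combinatorial computation.
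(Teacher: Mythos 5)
Your proof is correct, and it takes a somewhat different route from the paper's. The paper argues probabilistically in the stationary regime, chaining three factors: the fraction of time $\pi_t(i)$ and $\pi_t(j)$ are adjacent, namely $g/(|V|-1)$; the non-exit probability $(|V|-(g+1))/|V|$; and the landing probability $1/g$; it then multiplies by the $|V|-1$ servers $i\neq j$. You instead evaluate the defining sum (\ref{332}) directly: after noting that $(\pi(i),\pi(j))$ is uniform over ordered pairs of distinct vertices, you sum over the target $b=\pi(j)$ \emph{first}, which collapses $\sum_{b\neq a}\mathbb{P}\{T\{a,d\}=b\}$ to the indicator $\mathbb{I}\{\mathrm{dist}(a,d)\geq 2\}$ and reduces everything to counting the $K-(g+1)$ non-exit positions. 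The two computations produce the same factors rearranged, but yours is the cleaner one: it makes explicit the exchange of summation that justifies the paper's informal independence assertions (``independently of the destination of $c$''), it shows transparently why the answer does not depend on the map $D$, and it bypasses the coset-partition machinery of Lemmas \ref{L31} and \ref{lem:tor}, which your formula recovers as the cases $g=2$ and $g=4$. One small correction to your closing remark: connectedness is not entirely irrelevant, since it guarantees that all distances are finite, hence that the set of strictly-closer neighbors $W(a,d)$ is nonempty whenever $\mathrm{dist}(a,d)\geq 2$ and that $T\{a,d\}$ is a genuine probability distribution on that set; this is what makes $\mathbb{P}\{T\{a,d\}\neq\ast\}=\mathbb{I}\{\mathrm{dist}(a,d)\geq 2\}$ exact rather than an inequality.
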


\begin{proof}
Let us label by $i=1,2,...,\left\vert V\right\vert $ the nodes and the servers
of our network. Suppose that server $i$ is initially located at node $i$ and
at node $\pi_{t}\left(  i\right)  $ at time $t$, where $\pi_{t}\in
S_{\left\vert V\right\vert }$ is a random permutation. Let $i,j$ be two
indices. We want to compute the {stationary transit rate from server $i$ to
$j$, assuming server $i$ has an infinite backlog of customers}. For the
transit event to happen, it is necessary that the two nodes $\pi_{t}\left(
i\right)  $ and $\pi_{t}\left(  j\right)  $ be neighbors. The fraction of time
it is the case is equal to $\frac{g}{\left\vert V\right\vert -1}$ (in the
stationary regime). If it does happen, then there are two options. The first
is that customer $c,$ served at $\pi_{t}\left(  i\right)  ,$ leaves the
network. This happens with probability $\frac{\left(  g+1\right)  }{\left\vert
V\right\vert }.$ The complementary event has probability $\frac{\left\vert
V\right\vert -\left(  g+1\right)  }{\left\vert V\right\vert }.$ In this case,
customer $c$ jumps to one of the $g$ neighboring nodes of $\pi_{t}\left(
i\right)  .$ Since server $\pi_{t}\left(  j\right)  $ can be at any of these
$g$ nodes with probability $\frac{1}{g},$ independently of the destination of
$c,$ the probability that $c$ will land on $\pi_{t}\left(  j\right)  $ is
$\frac{1}{g}.$ Since there are $\left\vert V\right\vert -1$ servers different
from server $j,$ we have:%
\[
p_{G}=\left(  \left\vert V\right\vert -1\right)  \frac{g}{\left\vert
V\right\vert -1}\frac{\left\vert V\right\vert -\left(  g+1\right)
}{\left\vert V\right\vert }\frac{1}{g}=\frac{\left\vert V\right\vert -\left(
g+1\right)  }{\left\vert V\right\vert }.
\]

\end{proof}

Hence, the following theorem holds:

\begin{theorem}
For each $\lambda>0$, $\beta>0$, and for each $K>K^{\ast}=(g+1)/\lambda$, the
Markov process ${\mathcal{M}_{G}}$ is transient.
\end{theorem}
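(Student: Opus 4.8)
The plan is to mirror the proof of Theorem \ref{T3} verbatim, replacing only the combinatorial input. The structure of the argument is entirely graph-agnostic once three facts are available: (i) the server-migration process mixes to the uniform distribution on $S_{|V|}$, (ii) the probability that a deeply-buried customer exits the network (and the conditional jump probabilities) are governed by the stationary server positions, and (iii) the stationary transit rate through a tagged server equals $p_G$. Facts (i)--(iii) are supplied by Lemmas \ref{L4}, \ref{L11}, \ref{L12} and the just-proved Lemma \ref{L55}, whose hypotheses hold because $G$ is connected and $g$-regular, so the symmetric nearest-neighbor walk on the induced graph on $S_{|V|}$ is irreducible with uniform stationary measure. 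So the first thing I would do is simply record that the entire apparatus of Subsection \ref{SS:cycn} transfers to $G$ without change.

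Next I would set up the submartingales exactly as before. Label the $|V|$ servers $k=1,\dots,|V|$, pass to the embedded chain $M(n)$ and then to $M(\Lambda n)$ for a large integer $\Lambda=\Lambda(G,\lambda,\beta)$, and define
\[
X_n^k = M_{n\Lambda}^k - \Lambda
\]
as long as this is positive, freezing at $0$ thereafter. Relation (\ref{02}) holds with $R=\Lambda$, and on $\{X_n^k=0\}$ the submartingale inequality is automatic. The content is the drift estimate (\ref{01}): starting from a configuration in which every queue exceeds $\Lambda$, I would bound $M_H^k \ge M_0^k - H$ over the mixing time $H=H(G)$, and then invoke Lemma \ref{L55} to conclude that after time $H$ the arrival rate at each server is approximately $\lambda + p_G = \lambda + \frac{|V|-(g+1)}{|V|}$, against an exit (service) rate of $1$. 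The net drift is positive precisely when $\lambda + \frac{|V|-(g+1)}{|V|} > 1$, i.e. when $\lambda > \frac{g+1}{|V|}$, which is the hypothesis $K=|V| > K^\ast = (g+1)/\lambda$. Choosing $\Lambda$ large enough that the asymptotic transit rate of Lemma \ref{L55} is realized up to a margin smaller than the slack $\lambda - (g+1)/|V|$ yields some $\rho>0$ with $\mathbb{E}_{\mathcal{F}_0}(M_\Lambda^k) \ge M_0^k + \rho$, which is (\ref{01}).

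With both hypotheses of Theorem \ref{T1} verified, that theorem produces an initial state from which all queue lengths tend to $+\infty$ with positive probability; since the process is irreducible on the communicating class of states reachable from any configuration, positive escape probability from one state forces transience of ${\mathcal{M}_G}$, completing the proof. The only genuinely non-routine step is the drift computation, and its difficulty has already been absorbed into Lemma \ref{L55}: the honest obstacle is ensuring that the ``stationary transit rate $p_G$'' really controls the finite-$\Lambda$ expected increment uniformly in the initial state $Q_0$. This is where the uniformity-in-$Q_0$ clauses of Lemmas \ref{L11} and \ref{L12} are essential --- they guarantee that the convergence of the per-service exit and jump probabilities to their stationary values depends only on the customer's depth $H$ and not on the (arbitrarily complicated) rest of the configuration, so that a single $\Lambda$ works simultaneously for all $k$ and all starting states. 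I expect everything else to be a transcription of the cyclic case.
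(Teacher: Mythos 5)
Your proposal matches the paper's own treatment: the paper likewise reduces the regular-graph case to the argument of Theorem \ref{T3}, observing that Lemmas \ref{L4}, \ref{L11} and \ref{L12} carry over because the symmetric nearest-neighbor walk on a connected $g$-regular graph has the uniform measure as its unique stationary state, and substituting Lemma \ref{L55} for Lemma \ref{L31} to obtain the drift condition $\lambda+\frac{|V|-(g+1)}{|V|}>1$, i.e. $K>(g+1)/\lambda$. No changes are needed.
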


\subsection{General graphs}

Analogously, the following result holds for a connected graph $G$ that is not
regular. Denote by $g$ the maximum degree of vertices $v\in V$ and let $K=|V|$.

\begin{theorem}
For each $\beta>0$, the process ${\mathcal{M}_{G}}$ as well as each mean-field
processes ${\mathcal{M}_{G^{N}}}$, $N=1,2,\dots$, is transient whenever
$K>K^{\ast}=(g+1)/\lambda$.
\end{theorem}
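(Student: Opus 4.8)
The plan is to reduce the non-regular case to the machinery already developed for regular graphs, the only genuinely new ingredient being a suitable replacement for Lemma \ref{L55}. Since all of the earlier structural lemmas (Lemmas \ref{L4}, \ref{L11}, \ref{L12}) rely only on the fact that the symmetric nearest-neighbor random walk on $G$ has a well-defined limiting distribution on the servers' locations, I would first record that for any connected graph $G$ the random walk that swaps neighboring servers is reversible. Its stationary distribution on permutations is no longer uniform when $G$ is irregular, but the single-server marginal, i.e.\ the stationary probability that a given server sits at node $v$, is proportional to the degree $\deg(v)$. This is the standard reversible measure for edge-swap dynamics; I would state it and note that it reduces to the uniform measure exactly when $G$ is $g$-regular, recovering Lemma \ref{L4}.

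With the stationary location law in hand, I would compute the analogue of $p_G$. Fix a tagged server $i$ with an infinite backlog. In stationarity, conditionally on server $i$ being located at node $v$ (which happens with probability $\propto\deg(v)$), the completed customer leaves the network iff $\mathrm{dist}(v,D(c))\le 1$, i.e.\ iff $D(c)$ lies in the closed neighborhood of $v$, which contains $\deg(v)+1$ nodes; otherwise the customer transits to some neighbor of $v$ and, by Lemma \ref{L12}-type mixing, lands on any prescribed other server with the appropriate conditional probability. The key point is that I do not need the exact value of the transit rate; I only need a \emph{lower bound} on the total rate of arrivals to a typical server that exceeds the service rate $1$. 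Summing the transit rate over all source servers and averaging against the degree-weighted location law shows that the aggregate transit throughput per server is bounded below by a quantity of the form $(K-(g+1))/K$, using that the worst case for a customer's survival is achieved at the maximum-degree node, whose neighborhood has at most $g+1$ vertices. Thus at least a fraction $(K-(g+1))/K$ of served customers re-enter the network on average.

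The submartingale argument then runs verbatim as in the proof of Theorem \ref{T3}. Starting the chain $M$ from a configuration with all queues longer than $\Lambda$, I would set $X_n^k=M_{n\Lambda}^k-\Lambda$, frozen at $0$ once it hits $0$, and verify the two hypotheses of Theorem \ref{T1}: the increment bound (\ref{02}) holds with $R=\Lambda$, and for the drift (\ref{01}) I choose $H=H(K)$ past which server locations are nearly stationary, bound $M_H^k\ge M_0^k-H$ before mixing, and observe that after time $H$ the per-server arrival rate is at least $\lambda+(K-(g+1))/K$, exceeding the exit rate $1$ whenever $K>K^{\ast}=(g+1)/\lambda$. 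Choosing $\Lambda$ large enough that the linear-in-time growth after $H$ dominates the initial deficit $H$ yields a uniform positive drift $\rho$. Theorem \ref{T1} then produces an initial state from which all queues diverge with positive probability, giving transience. The same computation, with the mean-field swap rate $\beta/N$ and with transit targets chosen uniformly among the $N$ copies of the next-hop node, handles each $\mathcal{M}_{G^N}$; as in the cyclic mean-field case the throughput constant $(K-(g+1))/K$ is unchanged by $N$.

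The main obstacle I anticipate is getting the irregular analogue of Lemma \ref{L55} right: unlike the regular case, the location law is degree-weighted, so the clean cancellations of Lemma \ref{L31} no longer occur, and one must argue that the \emph{minimum} over servers (or a degree-averaged bound) of the re-entry probability still exceeds the critical threshold controlled by the maximum degree $g$. I would therefore phrase the estimate as an inequality $p_G\ge (K-(g+1))/K$ rather than an identity, since only a lower bound on drift is needed; establishing this bound uniformly over all source nodes — and checking that the degree weighting never pushes the survival probability above $(g+1)/K$ for the tagged customer — is the delicate step. Everything downstream is then a routine transcription of the cyclic proof.
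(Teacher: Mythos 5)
Your overall route is exactly the paper's: keep the whole submartingale machinery of Theorem \ref{T3} and replace the identity of Lemma \ref{L55} by the one-sided estimate $p_{G}\geq\bigl(\left\vert V\right\vert -(g+1)\bigr)/\left\vert V\right\vert$ with $g$ the maximum degree (the paper states precisely this and omits the proof of the inequality). However, the one genuinely new ingredient you supply --- the stationary location law of a tagged server --- is wrong, and the error is material. The swaps are driven by independent rate-$\beta$ Poisson clocks on the \emph{edges}, so the permutation process is a continuous-time walk on $S_{K}$ with symmetric rates $q(\pi,\pi\tau)=\beta$; the graph on $S_{K}$ generated by edge-transpositions is $|E|$-regular for \emph{every} connected $G$, regular or not. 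Hence the stationary law is uniform on $S_{K}$ and each server's marginal location is uniform on $V$ (a tagged server leaves a node $v$ at rate $\beta\deg(v)$, which exactly compensates the degree bias you have in mind; the $\deg(v)$-weighted measure is the stationary law of the discrete-time simple random walk, which is not the dynamics here). So Lemma \ref{L4} holds verbatim for irregular $G$.

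This matters in both directions. With the correct uniform law the bound is immediate: a served customer with destination $w$ exits iff its server sits in the closed neighborhood of $w$, a set of $\deg(w)+1\leq g+1$ nodes, so the exit probability is at most $(g+1)/K$ and, averaging (\ref{332}) over the uniform permutation, $p_{G}\geq(K-(g+1))/K$; the ``delicate step'' you flag simply dissolves. Conversely, under your degree-weighted premise the bound can genuinely fail (a destination whose closed neighborhood consists of high-degree vertices would have exit probability exceeding $(g+1)/K$), and you correctly sense this but leave it unresolved --- so, as written, your argument has a gap at exactly the point you identify as delicate. Replace the degree-weighted claim by the observation that the edge-clock swap chain is doubly stochastic on $S_{K}$, and the rest of your proposal (the verbatim transcription of the drift estimate, the choice of $\Lambda$ and $H$, and the mean-field variant with swap rate $\beta/N$) goes through as in the paper.
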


The only difference in the proof is that the equality $\left(  \ref{88}%
\right)  $ is replaced by the estimate%
\[
p_{G}\geq\frac{\left\vert V\right\vert -\left(  g+1\right)  }{\left\vert
V\right\vert }.
\]
We omit the proof. 

\section{{Mean-field infinite networks}}

\label{FT} { This section is focused on the mean-field version of certain
infinite networks. We first consider the networks $\left(  \mathbb{Z}%
^{1}\right)  ^{N}$ as $N\rightarrow\infty$. This leads to a NLMP on
$\mathbb{Z}^{1}$, which we study using the methodology introduced in
\cite{BRS} for a more general setting. This approach is then generalized to
Cayley graphs of discrete groups. }

\subsection{Non-linear Markov processes on $\mathbb{Z}^{1}$}

\label{ss}

In this section we consider the limit of the network $\left(  \mathbb{Z}
^{1}\right)  ^{N}$ as $N\rightarrow\infty,$
and more precisely the stationary distributions of this limiting network. We
focus on translation-invariant distributions, where invariance is w.r.t.
translations on $\mathbb{Z}$.

In this case, the NLMP is a dynamical system acting on measures $\mu$ on the
state of a queue.
The state of a queue $q_{v}$ can be identified with the sequence of (signed)
distances between the position of server $v$ and the destinations $D\left(
c_{i}\right)  $ of its customers. So the state becomes a finite integer-valued
sequence $\mathcal{N}\equiv\left\{  n_{1} ,...,n_{l};n_{i}\in\mathbb{Z}%
^{1}\right\}  $, where $l\geq0$ is the length of the queue $q_{v}.$

The rate of arrivals of transit customers,
leading, say, from the state $\left[  n_{1},...,n_{l-1}\right]  $ to the state
$\left[  n_{1},...,,n_{l-1},n_{l}\right]  $, is then a function of the integer
$n_{l}$ only, that we will denote by $\nu_{n_{l}}$.
With the above notation, we thus have
\begin{equation}
\nu_{k}\equiv\nu_{k}\left(  \mu\right)  =\left\{
\begin{array}
[c]{cc}%
\sum_{\mathcal{N}}\mu\left(  k+1,\mathcal{N}\right)   & \text{if }k>0,\\
\sum_{\mathcal{N}}\mu\left(  k-1,\mathcal{N}\right)   & \text{if }k<0,\\
0 & \text{if }k=0.
\end{array}
\right.  \label{12}%
\end{equation}
In what follows we look only for states $\mu$ which have symmetric rates
$\nu_{k}$, namely such that
\begin{equation}
\nu_{k}=\nu_{-k},\label{13}%
\end{equation}
for all $k\in\mathbb{Z}$. We also assume (for the sake of simplicity) that the
destination of a customer arriving at a node is this very same node.

The following result leverages the methodology developed in \cite{BRS}. It
provides a functional equation for fixed points of the NLMP. Each such fixed
point is a stationary regime of the NLMP.

Since the evolution of the NLMP is described by an infinite-dimensional
dynamical system in a space of probability measures (see Appendix), there
might exist non-trivial invariant sets of this evolution in the space of
probability measures (not just fixed points) Hence, other non-trivial
stationary measures might exist. The existence of non-trivial attractors for
the NLMP is discussed in \cite{RSV}. In the present paper, we restrict
ourselves to fixed points. The notation is that of the finite network case.

\begin{theorem}
\label{thmfu} { Under the foregoing assumptions, each fixed point $\mu$ of the
NLMP satisfy the functional equation:}
\begin{align}
&  \mu\left(  n_{1},...,n_{l-1}\right)  \left[  \nu_{n_{l}}(\mu)+\lambda
\delta\left(  n_{l},0\right)  \right]  -\mu\left(  n_{1},...,n_{l}\right)
\left(  {\sum_{k\ne0}\nu_{k}(\mu)+\lambda}\right) \nonumber\\
&  +\sum_{k}\mu\left(  k,n_{1},...,n_{l}\right)  -\mu\left(  n_{1}%
,...,n_{l}\right)  \mathbb{I}_{l\neq0}\label{14}\\
&  +\beta\left[  \mu\left(  n_{1}+1,...,n_{l}+1\right)  +\mu\left(
n_{1}-1,...,n_{l}-1\right)  -2\mu\left(  n_{1},...,n_{l}\right)  \right]
=0~.\nonumber
\end{align}

\end{theorem}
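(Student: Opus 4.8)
The plan is to read equation (\ref{14}) as the stationarity (forward Kolmogorov) equation $L_\mu^\ast\mu=0$ for the generator $L_\mu$ of the limiting non-linear Markov process, where the mean-field coupling enters only through the frozen arrival field $\nu_k=\nu_k(\mu)$ defined in (\ref{12}). A fixed point of the NLMP is precisely a measure $\mu$ that is invariant for the \emph{linear} Markov generator obtained by substituting this self-consistent field; so it suffices to write $L_\mu$ explicitly on the queue state $s=(n_1,\dots,n_l)$ and to collect the gain and loss terms at each $s$. The identity (\ref{14}) should then fall out by direct bookkeeping, the genuine content being (i) the correct description of $L_\mu$ in the $N\to\infty$ limit, which rests on the decoupling (propagation-of-chaos) result of \cite{BRS}, and (ii) the self-consistent identification of the transit field $\nu_k$.

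First I would enumerate the four elementary transitions of a single tagged queue and their effect on $s=(n_1,\dots,n_l)$. An exogenous arrival, at rate $\lambda$, appends a customer at distance $0$ (destination equals arrival node by assumption), sending $s\mapsto(n_1,\dots,n_l,0)$. A transit arrival of a customer at signed distance $k\ne0$ occurs at rate $\nu_k$ and appends $k$, so the combined rate of appending the value $n_l$ is $\nu_{n_l}+\lambda\,\delta(n_l,0)$; this produces the first bracket of (\ref{14}). A service completion occurs at rate $1$ when $l\ge1$ and removes the head, $s\mapsto(n_2,\dots,n_l)$; the corresponding loss term is $-\mu(s)\,\mathbb{I}_{l\ne0}$, while the matching gain into $s$ comes from every predecessor $(k,n_1,\dots,n_l)$, giving $\sum_k\mu(k,n_1,\dots,n_l)$. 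Finally each server performs a symmetric random walk at rate $\beta$ in each direction; a swap rigidly shifts every distance by $\pm1$, so $s$ loses mass at rate $2\beta$ and gains from $(n_1\pm1,\dots,n_l\pm1)$, producing the last bracket $\beta[\mu(\cdots+1)+\mu(\cdots-1)-2\mu(s)]$.

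I would then assemble the balance (inflow)$-$(outflow)$=0$ at the generic state $s$: the total arrival loss rate is $\sum_{k\ne0}\nu_k+\lambda$, the service loss is $\mathbb{I}_{l\ne0}$, and the swap loss is $2\beta$; matching these against the three gain contributions above reproduces (\ref{14}) line by line. The self-consistency of the field must be checked separately: a customer served at a head of distance $k+1>0$ transits to distance $k$ at a uniformly chosen far-away queue, so in the limit the rate at which distance-$k$ transit customers are injected into the tagged queue equals the head mass $\sum_{\mathcal N}\mu(k+1,\mathcal N)$, which is exactly (\ref{12}); since heads at distance $1$ and $0$ leave the network, no distance-$0$ transit is produced, consistent with $\nu_0=0$, and the symmetry (\ref{13}) restricts attention to the symmetric fixed points under consideration.

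The main obstacle is not the algebra but item (i): justifying that the $N\to\infty$ limit of the $(\mathbb{Z}^1)^N$ dynamics is the autonomous single-queue generator $L_\mu$ with Poisson transit input of intensity $(\nu_k)$, i.e. that distinct queues decouple and the empirical field converges to $\mu$. This is where I would invoke the general convergence theorem of \cite{BRS}, and where care is needed at the boundary case $l=0$ (the empty queue): there the service and head-removal terms must vanish, which is enforced by $\mathbb{I}_{l\ne0}$, while the first bracket is to be read as absent, in accordance with the fact that the empty state can only be entered through service of a length-one queue, already accounted for by $\sum_k\mu(k)$.
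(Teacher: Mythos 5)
Your proposal is correct and follows essentially the same route as the paper: the paper's proof also treats (\ref{14}) as the stationarity equation of the single-queue dynamics, obtained by writing out the gain/loss terms (arrival, transit, service completion, swap) of the general mean-field evolution equation of \cite{BRS} and specializing to $\mathbb{Z}^{1}$, FIFO, exponential unit-rate service, destination equal to arrival node, and constant swap rate $\beta$, then setting $\frac{d}{dt}\mu=0$. Your term-by-term bookkeeping, the self-consistency reading of (\ref{12}), and the remark on the $l=0$ boundary case all match the paper's argument.
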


{ The proof is forwarded to the Appendix. This equation has a simple
interpretation. The term
\[
\mu\left(  n_{1},...,n_{l-1}\right)  \left[  \nu_{n_{l}}(\mu)+\lambda
\delta\left(  n_{l},0\right)  \right]
\]
is the arrival rate of the NLMP leading to state $[n_{1},...,n_{l}]$. The
term
\[
\mu\left(  n_{1},...,n_{l}\right)  \left(  \sum_{{k\ne0}} \nu_{k}(\mu
)+\lambda+ 2\beta\right)
\]
is the total rate out of $[n_{1},...,n_{l}]$. The term
\[
\sum_{k}\mu\left(  k,n_{1},...,n_{l}\right)  -\mu\left(  n_{1},...,n_{l}%
\right)  \mathbb{I}_{l\neq0}%
\]
is the departure rate leading to state $[n_{1},...,n_{l}]$. The term
\[
+\beta\left[  \mu\left(  n_{1}+1,...,n_{l}+1\right)  +\mu\left(
n_{1}-1,...,n_{l}-1\right)  \right]
\]
is the swap rate leading to state $[n_{1},...,n_{l}]$.}

As we will see later, Equations $\left(  \ref{12}\right)  -\left(
\ref{14}\right)  $ can have several solutions, one solution or no solution,
depending on the value of the parameter $\lambda.$ If $\mu$ is a solution of
Equations $\left(  \ref{12}\right)  -\left(  \ref{14}\right)  $ for some
$\lambda,$ then we denote by
\begin{equation}
\nu\left(  \mu\right)  =\sum_{{k\ne0}}\nu_{k}\left(  \mu\right)
\end{equation}
the rate, in state $\mu$, of the transit customers to every node and by
$\eta\left(  \mu\right)  $ the rate, in state $\mu$, of the total flow to
every node:
\begin{equation}
\eta\left(  \mu\right)  =\nu\left(  \mu\right)  +\lambda.
\end{equation}

\begin{theorem}
\label{z1} For every positive $\eta<1$ there exists a unique value
$\lambda\left(  \eta\right)  $ of the exogenous flow rate $\lambda$ and { a
unique measure $\mu_{\eta}$ on the set of queue states} satisfying Equations
$\left(  \ref{12}\right)  -\left(  \ref{14}\right)  $ with $\lambda
=\lambda\left(  \eta\right)  $, and such that $\eta\left(  \mu_{\eta}\right)
=\eta$.
\end{theorem}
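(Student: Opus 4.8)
The plan is to collapse the infinite-dimensional fixed-point equation \eqref{14} to a scalar self-consistency condition for the distribution of the \emph{head-of-line (served) label}, and then to exploit the fact that fixing the total arrival rate $\eta$ makes this condition \emph{linear}. The key structural observation is that, because the arrival rates $\nu_k,\lambda$ in \eqref{14} do not depend on the current state of the tagged queue, the NLMP for a single queue is an $M/M/1$ queue fed by a Poisson stream of total rate $\eta=\lambda+\nu(\mu)$ carrying i.i.d. marks, on top of which the swaps act as an independent label motion.

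First I would read off the length marginal. Summing \eqref{14} over all label sequences of a given length $l$ shows that $l$ performs a birth--death chain with birth rate $\eta$ and death rate $1$ (swaps preserve length), so $\mathbb P(l)=(1-\eta)\eta^{l}$, which already forces $\eta<1$ and makes a tagged customer's sojourn time $\mathrm{Exp}(1-\eta)$, independent of its arrival label and of the swap motion. Since the swaps shift every label by a symmetric continuous-time random walk of rate $\beta$ in each direction, evaluating this walk at the independent exponential sojourn time shows that the label at the service epoch equals the arrival label plus an independent displacement $S$ with characteristic function $g(\theta)=\frac{1-\eta}{1-\eta+2\beta(1-\cos\theta)}$, i.e.\ a symmetric bilateral-geometric law of ratio $r=r(\eta,\beta)\in(0,1)$ fixed by $(1-r)^2/r=(1-\eta)/\beta$.

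Next I would write the self-consistency. Put $d_m=\sum_{\mathcal N}\mu(m,\mathcal N)$ for the rate of service completions with head label $m$. The previous step gives $d=\alpha*\gamma$, where $\gamma$ is the displacement law and the arrival rates are $\alpha_0=\lambda$, $\alpha_k=d_{k+1}$ for $k>0$, $\alpha_k=d_{k-1}$ for $k<0$ -- precisely the relations \eqref{12}. Passing to Fourier and splitting $\widehat d$ into its one-sided part $P(\theta)=\sum_{m\ge1}d_m e^{im\theta}$ (using $d_{-m}=d_m$) turns the self-consistency into the single scalar equation
\[
d_0+P(\theta)+P(-\theta)=g(\theta)\big[\lambda-2d_1+e^{-i\theta}P(\theta)+e^{i\theta}P(-\theta)\big],
\]
a Wiener--Hopf / Riemann--Hilbert problem in which $g$ is \emph{known} once $\eta$ is fixed, so the whole equation is linear in the unknowns $(\{d_m\},\lambda)$. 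Scaling all $d_m$ (and hence $\lambda=d_{-1}+d_0+d_1$, the balance of the exit rate against the exogenous rate) by a common factor leaves it invariant, so its solutions form a ray.

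Finally I would solve this linear problem: factor $g$, compute the index of the associated Riemann--Hilbert problem, and thereby show that the homogeneous solution space is exactly one-dimensional, with $d_m$ decaying like $r^{|m|}$ (up to a polynomial prefactor) and hence summable, and with $\{d_m\}$ nonnegative by the explicit factorization. The normalization $\eta(\mu_\eta)=\sum_m d_m=\eta$ then selects the unique point on the ray and determines $\lambda(\eta)$; uniqueness of the full measure $\mu_\eta$ follows because, for $\eta<1$, the tagged-queue Markov process driven by these Poisson arrival rates is positive recurrent with a unique stationary law, of which \eqref{14} is exactly the balance equation. The main obstacle will be this last step -- verifying that the index makes the ray genuinely one-dimensional (existence and uniqueness together) and that the resulting $\{d_m\}$ and $\lambda(\eta)$ are truly nonnegative; the $\eta$-parameterization is what makes it tractable, since it removes the circular dependence of the displacement law $\gamma$ on the unknown rates and reduces everything to a linear factorization.
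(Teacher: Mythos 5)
Your reduction is structurally the same as the paper's: both arguments collapse $\left(\ref{14}\right)$ to a single-particle description by noting that, once $\eta$ is fixed, the queue is $M/M/1$ with sojourn time $\mathrm{Exp}(1-\eta)$, that a customer's label is shifted during its sojourn by an independent symmetric two-sided geometric displacement (your $\gamma$ with transform $g(\theta)$ is exactly the paper's $\tau_\eta=W(\xi_\eta)$), and that $\left(\ref{12}\right)$ closes this into a self-consistency condition for the label distribution; the full measure $\mu_\eta$ is then recovered, as you say, as the unique stationary law of the single ergodic queue driven by the resulting Poisson inputs. Where you diverge is the final step, and that is also where your proposal is incomplete: you pose the self-consistency as a Wiener--Hopf/Riemann--Hilbert problem and explicitly defer the index computation, the one-dimensionality of the solution ray, and the nonnegativity and summability of $\{d_m\}$ --- but these items together \emph{are} the existence-and-uniqueness content of the theorem, so as written the crux is asserted rather than proved. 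The paper dissolves this obstacle probabilistically: the self-consistency condition is precisely the invariance equation of the composed Markov chain $Q=P_1P_2$ on $\mathbb{Z}$ (convolution with $\gamma$, followed by the deterministic one-step move toward the origin that sends $\{-1,0,1\}$ to $0$). Since the displacement is symmetric with exponential tails and the deterministic part contributes drift $-\mathrm{sgn}(s)$, this chain is positive recurrent and therefore has a unique invariant probability $q$, automatically nonnegative and summable; setting $\nu_k=\eta q_k$ for $k\neq0$ and $\lambda=\eta q_0$ yields the unique point on your ray with the normalization $\sum_k \nu_k+\lambda=\eta$, with no analytic factorization needed. I suggest you either substitute this ergodicity argument for the Riemann--Hilbert step, or, if you wish to keep the transform route, actually carry out the factorization and the positivity verification --- which is substantially more work and is exactly the part you flagged as the main difficulty.
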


\begin{proof}
{ In the mean-field limit, the total inflow rate to each node $v\in\mathbb{Z}$
is a Poisson process with the rate {$\eta$} for all $v$.} It is splitted
according to the possible destinations, {$v+h$, $h\in\mathbb{Z}$} of the
arriving customers. { The customers arriving to $v$ with destination $v+h$}
also form a Poisson process with rate $\nu_{h}$, so that we have
\[
{\eta}=\sum_{h\in{\mathbb{Z}}}\nu_{h},
\]
{with $\nu_{0}=\lambda$}. All these arrival Poisson processes are
independent.

Consider the random variable $\xi_{\eta}$, which is the total time the
customer spends in any given server in the stationary regime. It has
exponential distribution, which depends only on { $\eta=\sum_{k\ne0}\nu
_{k}+\lambda$,} which does not depend on the customer type. It is defined
uniquely by its expectation, which is $\mathbb{E}\left(  \xi_{\eta}\right)
=\left(  1-\eta\right)  ^{-1}.$

Consider now some tagged customer. Suppose it has type $k$ when arriving to
the tagged server. When it leaves the server, its type is changed to
$k+\tau_{\eta},$ where $\tau_{\eta}$ is an integer valued random variable.
This change happens due to the fact the server can move during the service
time of the tagged customer, i.e. to $\beta$-terms in $\left(  \ref{14}%
\right)  .$ By symmetry, $\mathbb{E}\left(  \tau_{\eta}\right)  =0.$ The
distribution of $\tau_{\eta}$ is the following. Consider a random walker
$W\left(  t\right)  $, living on $\mathbb{Z}^{1},$ which starts at $0$ (i.e.
$W\left(  0\right)  =0$) and which makes $\pm1$ jumps with rates $\beta.$ Then
$\tau_{\eta}=W\left(  \xi_{\eta}\right)  .$

The above observations lead to the following characterization of the rates
$\nu_{k}$ obtained from the stationary distribution of the following ergodic
Markov process on $\mathbb{Z}^{1}.$ Define the probability transition matrix
$P_{1}=\left\{  \pi_{st}\right\}  $ by $\pi_{st}=\Pr\left(  \tau_{\eta
}=s-t\right)  .$ Of course, this Markov chain on $\mathbb{Z}^{1}$ is not
positive recurrent since its mean drift is zero. Let $P_{2}$ be a second
Markov chain, with transition probabilities%
\[
\rho_{st}=\left\{
\begin{array}
[c]{cc}%
1 & \text{for }t=s-1,\ \ s\ge2,\\
1 & \text{for }t=0,\ s=1,0,-1,\\
1 & \text{for }t=s+1,\ s\le-2,\\
0 & \text{in other cases.}%
\end{array}
\right.
\]
The map $P_{2}$ is non-random map of $\mathbb{Z}_{1}$ into itself. Consider
the composition Markov chain, with transition matrix $Q=P_{1}P_{2}$. This
chain, which will be referred to as the single particle process below, is
positive recurrent (it has a drift towards the origin), and it hence has a
unique stationary distribution $q=\left\{  q_{k},k\in\mathbb{Z}^{1}\right\}
$. We take
\begin{equation}
\nu_{k}=\eta q_{k},\ k\neq0;\ \lambda=\eta q_{0}. \label{15}%
\end{equation}

Consider now the evolution of one single server queue with infinitely many
types $k\in\mathbb{Z}$ of customers, arriving to the queue according to
Poisson point processes with rates $\nu_{k}$, $k\neq0,$ and with rate
$\lambda$ for $k=0$, as defined by Equation $\left(  \ref{14}\right)  $.

Assume in addition that all customer types in the queue are incremented of one
unit according to a global exponential clock with rate $\beta$, and
decremented of one unit according to another independent global exponential
clock, also with rate $\beta$. This queuing process is an irreducible Markov
process, and since $\eta<1$, it is ergodic, so that it has a unique stationary
distribution. Denote by $\mu\left(  n_{1},...,n_{l}\right)  $ the stationary
probability of state $n_{1},...,n_{l}$ for this queue. By definition, these
probabilities satisfy $\left(  \ref{14}\right)  $.

In addition, it follows from the fact that $\{q_{k}\}$ is the steady state of
the Markov chain $Q$ that the rate $\nu_{k}$ (with $k>0,$ say) coincides with
the probability to find the queue in the state with the first customer having
the type $k+1.$ But this is exactly relation $\left(  \ref{12}\right)  $.

Note that the rates $\nu_{k}$ are defined in a unique way, once $\eta$ is
given, see above. As a result, the same uniqueness holds for the probabilities
$\mu\left(  n_{1},...,n_{l}\right)  $. This proves the existence and the
uniqueness statements of the theorem.
\end{proof}

We now state some properties of the function $\lambda\left(  \eta\right)  $ as
the parameter $\eta$ varies in $\left(  0,1\right)  .$

\begin{proposition}
\label{16} There is a $\lambda_{+}>0$ such that, for any positive
$\lambda<\lambda_{+}$, there are at least two different values $\eta=\eta
_{-}(\lambda)$ and $\eta=\eta_{+}(\lambda)$ satisfying the relation
$\lambda\left(  \eta\right)  =\lambda$ and such that $\eta_{-}(\lambda
)\rightarrow0$ and $\eta_{+}(\lambda)\rightarrow1$ as $\lambda\rightarrow0$.
\end{proposition}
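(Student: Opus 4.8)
The plan is to study the asymptotic behavior of the function $\lambda(\eta)$ defined implicitly through Theorem \ref{z1} at the two endpoints of the interval $(0,1)$, and to show that $\lambda(\eta)\to 0$ both as $\eta\to 0^{+}$ and as $\eta\to 1^{-}$. Since $\lambda(\eta)$ is (as we expect to verify) continuous and strictly positive on $(0,1)$, it must attain a positive maximum $\lambda_{+}$ at some interior point; then for every $\lambda\in(0,\lambda_{+})$ the intermediate value theorem applied separately on the two subintervals adjacent to the maximizer yields at least two distinct preimages $\eta_{-}(\lambda)$ and $\eta_{+}(\lambda)$, lying respectively near $0$ and near $1$. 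The required limits $\eta_{-}(\lambda)\to 0$ and $\eta_{+}(\lambda)\to 1$ as $\lambda\to 0$ then follow from the boundary behavior of $\lambda(\eta)$.

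The heart of the matter is the two endpoint computations, and these I would extract from the single-particle process $Q=P_{1}P_{2}$ constructed in the proof of Theorem \ref{z1}. Recall that $\lambda=\eta\, q_{0}$, where $q=\{q_{k}\}$ is the unique stationary distribution of $Q$. As $\eta\to 0^{+}$, the service time $\xi_{\eta}$ has mean $(1-\eta)^{-1}\to 1$, so the displacement $\tau_{\eta}=W(\xi_{\eta})$ stays $O(1)$ and the chain $P_{1}$ remains a fixed nondegenerate kernel; the stationary mass $q_{0}$ converges to a strictly positive constant $q_{0}^{(0)}<1$, whence $\lambda(\eta)=\eta\,q_{0}\to 0$ linearly in $\eta$. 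As $\eta\to 1^{-}$, the mean service time $(1-\eta)^{-1}$ diverges, so $\xi_{\eta}\to\infty$ and the swap-driven random walk $W$ spreads out over a window of width of order $(1-\eta)^{-1/2}$; consequently the one-step kernel $P_{1}$ becomes more and more diffusive, the stationary distribution $q$ of the recentering chain $Q$ spreads out, and its value at a single point, $q_{0}$, tends to $0$. Thus $\lambda(\eta)=\eta\,q_{0}\to 0$ again. I expect the precise rate here to be of order $(1-\eta)^{1/2}$, reflecting the $\sqrt{\cdot}$ scaling of the diffusive window, but for the qualitative statement only $q_{0}\to 0$ is needed.

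The main obstacle is the second limit: rigorously controlling $q_{0}\to 0$ as $\eta\to 1^{-}$ requires understanding how the stationary distribution of the composition chain $Q=P_{1}P_{2}$ flattens when the displacement kernel $P_{1}$ becomes increasingly spread out. The clean way to handle this is to exploit the explicit structure of $P_{2}$, which contracts each step toward $0$ by one unit (and maps $\{-1,0,1\}$ to $0$), so that $Q$ has a genuine restoring drift toward the origin of unit size per step; balancing this unit inward drift against the diffusive $O((1-\eta)^{-1/2})$ fluctuations of $P_{1}$ shows that the stationary law $q$ is supported, up to exponentially small tails, on a window whose width grows like $(1-\eta)^{-1/2}$, forcing the single-site mass $q_{0}$ down to $0$. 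I would formalize this either by a Lyapunov/Foster argument giving two-sided control on the stationary tails, or by a direct comparison with a reflected random walk. Once $q_{0}\to 0$ at both ends is established, continuity of $\eta\mapsto q_{0}(\eta)$ (inherited from continuity of the kernel $Q$ in $\eta$, itself continuous because $\xi_{\eta}$ depends continuously on $\eta$) completes the argument via the intermediate value theorem as described above.
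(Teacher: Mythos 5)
Your proposal follows essentially the same route as the paper: show $\lambda(\eta)=\eta q_0\to 0$ at both endpoints $\eta\to 0$ and $\eta\to 1$ (the latter because the displacement $\tau_\eta=W(\xi_\eta)$ spreads out and hence the stationary law $q$ of $Q=P_1P_2$ flattens), then conclude by continuity and the intermediate value theorem. The only difference is that you sketch an explicit Lyapunov/reflected-walk control of the flattening of $q$, a step the paper asserts without detail; this is a welcome addition but not a different argument.
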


\begin{proof}
Clearly, $\lambda\left(  \eta\right)  \rightarrow0$ as $\eta\rightarrow0.$ We
want to argue that $\lambda\left(  \eta\right)  \rightarrow0$ also when
$\eta\rightarrow1.$ Indeed, in this regime every customer spends more and more
time waiting in the queue, so for every $k$ the probability $\Pr\left(
\xi_{\eta}\leq k\right)  \rightarrow0$ as $\eta\rightarrow1.$ Therefore the
distribution of the random variable $\tau_{\eta}$ becomes more and more spread
out: for every $k,$ ${\mathbb{P}}\left(  \left\vert \tau\right\vert _{\eta
}\leq k\right)  \rightarrow0$ as $\eta\rightarrow1.$ Therefore the same
property holds for the stationary distribution $q,$ and the claim follows from
$\left(  \ref{15}\right)  $ and Proposition \ref{16}. In particular, this
implies that the equation (in $\eta$):
\[
\lambda\left(  \eta\right)  =a>0
\]
has at least two solutions for small $a$: $\eta_{-}$ close to 0 and $\eta_{+}$
close to $1$. This follows from the continuity of $\lambda\left(  \eta\right)
$.
\end{proof}

\subsubsection{{Computational illustration}}

{ Consider the evolution of the distance between the tagged customer and its
destination node in the above mean-field model. This is a continuous time
Markov chain on the non-negative integers with the following transition rates:
for $n>1$,
\begin{align*}
q(n,n+1) =\beta, \quad q(n,n-1) =\beta+\gamma,
\end{align*}
with $\gamma=1 -\eta$. This is because the tagged customer spends on a given
server an exponential time with parameter $\gamma$. Similarly,
\begin{align*}
q(1,2 ) =\beta, \quad q(1,0 ) =\beta, \quad q(1,\ast) =\gamma
\end{align*}
and
\begin{align*}
q(0,1 ) =2\beta, \quad q(0,\ast) =\gamma,
\end{align*}
where $\ast$ is absorbing. Let $T(n)$ be the mean time to absorption for a
customer at distance $n $ from its destination. The function $T(\cdot)$
satisfies the equations:
\begin{align*}
T(n)  &  =\frac{1}{2\beta+\gamma}+\frac{\beta}{2\beta+\gamma}T(n+1)+\frac
{\beta+\gamma}{2\beta+\gamma}T(n-1),\quad n>1,\\
T(1)  &  =\frac{1}{2\beta+\gamma}+\frac{\beta}{2\beta+\gamma}T(2) +\frac
{\beta}{2\beta+\gamma}T(0),\\
T(0)  &  =\frac{1}{2\beta+\gamma}+\frac{2\beta}{2\beta+\gamma}T(1).
\end{align*}
There is exactly one solution of these equations which behaves asymptotically
linearly as $n\rightarrow\infty$; all the other solutions behave
exponentially. This linear function is
\[
T(n)=\frac{1}{\gamma}\left(  n+\frac{\beta}{\gamma} \frac{2\beta+\gamma
}{3\beta+ \gamma}\right)  ,
\]
for all $n\ge1$. This in turn determines that
\[
T(0)= \frac1 \gamma+ \frac{\beta^{2}}{\gamma^{2}} \frac{ 2}{ 3\beta+\gamma}.
\]
Using the assumption that the destination of the tagged customer is the node
where it arrives, we get that the mean number of servers visited by the tagged
customer till absorption is
\begin{equation}
\mathbb{E}[N]= \gamma T(0)= 1 + \frac{2 \beta^{2}}{\gamma(3\beta+\gamma)}.
\end{equation}
The total rate in a queue hence satisfies the equation
\[
\eta=1-\gamma=\lambda\left(  1+ \frac{ \beta^{2} }{\gamma} \frac2
{3\beta+\gamma}\right)  .
\]
When $\beta$ is large, this boils down to the equation for $\nu=\eta-\lambda$
\[
\nu= \frac{2\lambda\beta}{3\gamma}= \frac{2\lambda\beta}{3(1-\lambda-\nu)},
\]
or equivalently to the equation
\[
3\nu^{2}-3\nu(1-\lambda) +2\lambda\beta=0.
\]
The discriminant is is positive if $\lambda<\frac4 3 \beta-4$ and in this
case, there are two roots
\begin{align*}
\nu^{+}  &  =\frac{1 -\lambda+\sqrt{(1-\lambda)^{2}-\frac8 3 \lambda\beta}}%
{2}\\
\nu^{-}  &  =\frac{1 -\lambda-\sqrt{(1-\lambda)^{2}-\frac8 3 \lambda\beta}}%
{2}.
\end{align*}
It is easy to see that under these conditions,
\[
0<\nu^{-}<\nu^{+}< 1 -\lambda,
\]
so that these are the two solutions given by the theory. }

This computational framework allows one to check the robustness of the
proposed framework to the specific assumptions made for mathematical
simplicity. One can for instance change the destination of a customer to be at
distance $d$ from the arrival node (rather than 0 here), or change the
absorption rule to be only when a service completes at the destination (rather
than the destination or one of the two neighbors here) and check by
computations of the same type that one still finds quite similar phenomena.

\subsection{{Non-linear Markov processes on Cayley graphs}}
This subsection extends the previous results in two ways.
First, $\mathbb{Z}^{1}$ is replaced by the Cayley graph of a countable group.
Second, we relax the assumption that the destination of an exogenous
customer arriving at some node is this same node.

\subsubsection{{Cayley networks}}

In this section we rewrite the equations which were studied in the last
subsection for the case of $\mathbb{Z}^{1}$ and we prove a theorem about the
structure of the stationary measures $\delta_{\mu}$ of the NLMP associated
with fixed points. Such measures $\mu$ will be called equilibria.

The underlying graph $G$ is assumed to be the Cayley graph of a countable
group (also denoted by $G$) with a finite generating set
\[
F=\left\{  g_{1},...,g_{k},g_{1}^{-1},...,g_{k}^{-1}\right\}  .
\]
Typical examples are $\mathbb{Z}^{d}$ or $\mathbb{T}^{d}$. The main theorem
will focus on the case of an infinite group.

 {
We suppose that the destination assignment rule, the jump direction, the jump
rates etc. are all $G$-invariant. The destination assignment rule is described by
$\Lambda=\left\{ \lambda_{h},h\in G\right\} $, where $\lambda_h$ denotes the rates of
external inflows to node $e$ of customers with address $h$.
At all other nodes the external flows have the same structure.
Let $\lambda=\sum_{h\in G} \lambda_{h}$.
Denote by $X_{G}$ the associated NLMP on $G$.
}

Consider a customer which finished its service at node $v,$ and assume it has
the neutral node $e\in G$ as its destination. Then it goes to the neighboring
node $b\left(  v\right)  $ which is closest to $e$ in graph distance on the
Cayley graph. If there are several such nodes, say $b_{1}\left(  v\right)
,\cdots,b_{R}\left(  v\right)  $, $R=R\left(  v\right)  $, then it chooses one
of them with probability $\frac{1}{R\left(  v\right)  }.$ We only look for
equilibria $\mu$ which are $G$-invariant. This means that, under $\mu$, the
rate $\nu$ of the Poisson flow of transit customers is the same at every node
$v$ and that the part of this flow consisting of customers with destination
$vh$ has rate $\nu_{h}$, which does not depend on $v$. This also means that,
in state $\mu$, the probability to have a queue of $l$ customers with
destinations $vh_{1} ,...,vh_{l}$ at node $v$ depends only on the string
$\left(  h_{1},...,h_{l}\right)  $ of elements of $G$. We denote this
probability by $\mu\left(  h_{1},...,h_{l}\right)  $.

{Denote
\begin{equation}
\eta_{h}=\nu_{h}+\lambda_{h}\quad\text{and}\quad\eta=\sum_{h\in G}\eta_{h}
\label{61}%
\end{equation}
}

The functional equations for the stationary measure now take the form {%
\[
0=-\mu(h_{1},\dots,h_{l})\left[  (1-\delta_{l=0})+\eta\right]  +\mu
(h_{1},\dots,h_{l-1}){\eta_{h_{l}}}%
\]
}%
\begin{equation}
+\sum_{h\in G}\mu(h,h_{1},\dots,h_{l})+\sum_{{g\in F}}\beta(\mu(h_{1}%
g,\dots,h_{l}g)-\mu(h_{1},\dots,h_{l})),\ l=0,1,2,...\ . \label{rz}%
\end{equation}
Our assumptions on the service discipline imply that the rates
 {$\nu_{h}$} are determined by the measure $\mu$ through the
following generalization of (\ref{12}):
\begin{equation}
{\nu_{h}}=\sum_{i=1}^{s}\frac{1}{R\left(  v_{i}\right)  }\sum_{\mathcal{N}}%
\mu\left(  v_{i},\mathcal{N}\right)  , \label{71}%
\end{equation}
where the inner summation is over all finite strings of nodes of $G.$ As in
Section \ref{ss}, our goal is to find  {the rates $\nu_{h}$,
$h\in G$}, satisfying (\ref{rz}) and (\ref{71}).


\subsubsection{Single particle process}

\label{322}

As in the special case of $\mathbb{Z}^{1}$ above, it will be useful to follow
the evolution of a single customer of the process $X_{G}$. In this subsection
we describe the associated continuous time Markov random walk.

Since the inflow rates at each node $v\in V$ in the process $X_{G}$ are
Poisson, in equilibrium all queue lengths are distributed geometrically and
customers have an exponentially distributed sojourn time $T$. If the total
arrival rate is $\eta<1$ and the service rate is $1$, then $T$ is
exponentially distributed with mean $H$ with $H=H\left(  \eta\right)
=1/(1-\eta)$.

We now describe the continuous-time Markov process ${\mathcal{B}}^{\eta}$ of a
an exogeneous particle that arrives to $e\in G,$ say, while the probability
distribution $d_{h}$ of its destination node $h$ is given by
\begin{equation}
d_{h}=\frac{\lambda_{h}}{\lambda}. \label{62}%
\end{equation}
The particle makes jumps of two kinds: random jumps due to the jumps of the
server harboring it (collectively with all customers sitting there) with rate
$\beta$, and directed individual jumps to a neighboring server that is closer
to the destination of the particle. The directed jumps happen at random times
(service event times) with inter-service intervals distributed exponentially
with mean $H$. If the particle is at distance $0$ or $1$ from its destination
and its service event happens, the particle dies (reaches the absorbing state).

Summarizing, we have the following:

\begin{claim}
For all jump rates $\beta$, probability distributions $\left\{  d_{h}\right\}
$ and total rate $\eta$, the address of the tagged customer is a continuous
time Markov random walk ${\mathcal{B}}^{\eta}$ on $G$.
\end{claim}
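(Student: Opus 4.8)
The plan is to verify directly that the described address process carries a well-defined, state-dependent generator assembled from two independent families of exponential clocks, and hence is Markov. Write $a(t)\in G$ for the address of the tagged customer at time $t$, i.e. the group element encoding the position of its harboring server relative to the customer's destination (fixed once and for all at arrival, with law $d_{h}$), so that the absorbing set is $\{h:\mathrm{dist}(h,e)\le 1\}$. The motion of $a(t)$ splits into the swap-driven displacements and the service-driven directed hops, and the whole content of the claim is that, conditionally on $a(t)$, the law of the future increments does not depend on the past. I would therefore reduce the statement to two independent ingredients: (i) the swap mechanism produces a space-homogeneous, Poisson-driven random-walk component that is independent of the queue contents; and (ii) at each visited server the residual sojourn time is a fresh $\mathrm{Exp}(1-\eta)$ variable, independent of the current address and of the swap clocks.

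For ingredient (i), I would note that the swaps of the server harboring the customer are governed by the independent edge Poisson clocks of the network, which fire at rate $\beta$ regardless of the queue's content; each firing translates the whole queue, hence the customer, by a generator $g\in F$ (acting by right multiplication, as in $\left(\ref{rz}\right)$). Since these clocks are independent of the arrival and service clocks, this component is a homogeneous continuous-time random walk on $G$ with rate $\beta$ to each neighbor $hg$, independent of everything governing the queue length. For ingredient (ii), I would invoke the equilibrium structure recalled above: in the mean-field stationary regime the arrival stream to each queue is Poisson of total rate $\eta<1$ and the service rate is $1$, so each queue is an $M/M/1$ queue, and by the classical FIFO sojourn-time identity the time a customer spends at a server is exponential with rate $1-\eta$, i.e. with mean $H=1/(1-\eta)$. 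This sojourn clock is independent of the swap clock and of the address, so a directed hop fires at rate $1-\eta$, at which instant the customer is absorbed (if within graph distance $1$ of $e$) or moves to one of the $R(v)$ nearest neighbors toward $e$, chosen uniformly.

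The Markov property then follows by superimposing the two generators: from state $h$ the process jumps to $hg$ at rate $\beta$ for each $g\in F$ and performs a directed hop (or absorption) at rate $1-\eta$, and by the memorylessness of both exponential families the residual times depend on the past only through $h$. The remaining point, and the one I expect to be the main obstacle, is to justify that at each directed hop the customer lands on a \emph{fresh} server whose queue is again in the stationary $M/M/1$ regime and independent of the customer's past trajectory. This is exactly where the mean-field limit $N\to\infty$ and the propagation-of-chaos estimates of \cite{BRS} are needed: because the tagged customer is one among infinitely many, its passage does not bias the empirical law of the target queue, so the successive sojourn times are i.i.d. $\mathrm{Exp}(1-\eta)$ and independent of the swap-driven displacement. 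Once this independence is secured, the pair (time to next directed hop, intervening swap increments) is a deterministic function of $h$ together with independent exponential randomness, which is precisely the statement that $a(t)=\mathcal{B}^{\eta}(t)$ is a time-homogeneous continuous-time Markov random walk on $G$.
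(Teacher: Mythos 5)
Your argument is correct and takes essentially the same route as the paper, which states the claim as a summary of exactly the two ingredients you isolate (rate-$\beta$ swap-driven jumps independent of queue content, and directed hops at the service completion epochs of an $M/M/1$ queue whose stationary sojourn time is exponential with mean $H=1/(1-\eta)$), offering no further proof. Your explicit flagging of the need for independence of successive sojourn times across visited servers is a point the paper leaves implicit in the mean-field/NLMP framework of \cite{BRS}.
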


For some infinite graphs $G$ (say, for regular trees of degree $3$ or more)
the process ${\mathcal{B}}^{\eta}$ might be transient for all $\eta\in
\lbrack0,1)$, but this does not happen for $\beta$ small enough, and we only
consider the latter case below. Then the expected number of directed jumps of
the particle until absorption is finite for $\eta$ small enough. Let us denote
this expected number by $N(H(\eta))$. The function $N\left(  \cdot\right)  $
is a continuous increasing function, $N:{\mathbb{R}}_{+}\rightarrow
{\mathbb{R}}_{+}\cup\infty,$ such that $N(0)=\mathbb{E}_{\left\{
d_{h}\right\}  }\left(  \max\{\mathrm{dist}\left(  e,h\right)  ,1\}\right)  $
and $\lim_{\eta\rightarrow1}N(H(\eta))= \frac{\left\vert V\right\vert
}{\left\vert F\right\vert }$.

Some properties of the process $X_{G}$ can be retrieved from those of the
process ${\mathcal{B}}^{\eta}$. In particular, one can find the value of the
rate $\lambda$ of the external inflows to each node $v$ from the relation
\begin{equation}
\lambda=\lambda\left(  \eta\right)  =\frac{\eta}{N(H(\eta))}. \label{64}%
\end{equation}
Indeed, $\eta$, which is the load factor per station, is equal to the product
of the mean number of arrivals per unit time $\lambda$ and the mean number of
nodes visited by a typical customer.

The function $\lambda\left(  \eta\right)  $ thus defined is continuous on
$[0,1]$ and takes values $\lambda\left(  0\right)  =0$ and $\lambda\left(
1\right)  =|F|/|V|$. For infinite graphs $G$, we have $\lambda\left(
1\right)  =0$. Note that in some cases $\lambda\left(  \eta\right)  \equiv0$.
This happens, for instance, if $G$ is a tree of degree $g\geq3$ and $\beta$ is
large enough.

\subsubsection{Equilibria}

 {
Here is the generalization of Theorem \ref{z1}, where $G$, the
exogenous customers rates $\lambda_{h}$, $h\in G$, and the swap rate
$\beta$ are the basic data.
}

\begin{theorem}
 {
For every $0<\eta<1$, consider the processes $\mathcal{B}^{\eta}$
defined by the jump rate parameter $\beta$, the probability distribution
$\left\{ d_{h}=\frac{\lambda_{h}}{\lambda}\right\} $, and the rate $\eta$.
Assume that the lifetime of the random walk ${\mathcal{B}}^{\eta}$ has finite mean value.
For all $0<\eta<1$, there exist a unique $\lambda$ and a unique solution
$\mu_\eta(\cdot)$ of (\ref{rz})-(\ref{71})
such that the associated transit rates
$\{\nu_{h},h\in G\}$ satisfy the condition
$$\sum_{h\in G} \nu_{h}+\lambda_h=\eta.$$
}
\end{theorem}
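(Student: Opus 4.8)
The plan is to mimic the structure of the proof of Theorem \ref{z1}, which handles the special case $G=\mathbb{Z}^{1}$, replacing the explicit single-particle chain $Q=P_{1}P_{2}$ there by the abstract random walk ${\mathcal{B}}^{\eta}$ introduced in Subsection \ref{322}. The key idea, as in the $\mathbb{Z}^{1}$ case, is that fixing the load factor $\eta\in(0,1)$ decouples the problem: once $\eta$ is fixed, the service time $T$ of a tagged customer is exponential with mean $H=H(\eta)=(1-\eta)^{-1}$, independently of its type, and this in turn pins down a well-defined single-particle process ${\mathcal{B}}^{\eta}$ on $G$ whose dynamics no longer depend on the unknown measure $\mu$.

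First I would use the hypothesis that ${\mathcal{B}}^{\eta}$ has finite mean lifetime to conclude that ${\mathcal{B}}^{\eta}$, viewed as a discrete-time chain recording the successive positions (addresses) of the tagged customer at service epochs, is positive recurrent and thus possesses a unique stationary distribution $\{q_{h},h\in G\}$ on the address set. This $\{q_{h}\}$ plays exactly the role of the invariant distribution $q$ of the chain $Q$ in Theorem \ref{z1}. I would then \emph{define} the candidate transit rates by the analogue of $\left(\ref{15}\right)$, namely $\nu_{h}=\eta\, q_{h}$ for $h\neq e$ and $\lambda_{h}=\eta\, q_{h}$ for the exogenous part, so that $\sum_{h}(\nu_{h}+\lambda_{h})=\eta\sum_{h}q_{h}=\eta$, giving both the normalization claimed in the theorem and the value of $\lambda=\sum_{h}\lambda_{h}$ via $\left(\ref{64}\right)$, $\lambda=\eta/N(H(\eta))$.

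Next, with these rates $\nu_{h}$ in hand as fixed Poisson input rates, I would consider the single $M/M$-type queue with countably many customer types $h\in G$, arrivals at rates $\nu_{h}$ (type $\neq e$) and $\lambda_{h}$ (exogenous), service rate $1$, and the two global $\beta$-clocks incrementing/decrementing all types along the generators of $F$. Because $\eta<1$, this queue is an irreducible ergodic Markov process and hence has a unique stationary measure $\mu_{\eta}$; by construction its balance equations are precisely $\left(\ref{rz}\right)$. The final consistency check is to verify that the rates $\nu_{h}$ recovered from $\mu_{\eta}$ through $\left(\ref{71}\right)$ agree with the $\nu_{h}=\eta q_{h}$ we started from; this is the fixed-point closure and follows, as in Theorem \ref{z1}, from the fact that $\{q_{h}\}$ is stationary for ${\mathcal{B}}^{\eta}$, so that the steady-state probability of finding a given type at the head of the queue matches the corresponding transit rate. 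Uniqueness of $\lambda$ and of $\mu_{\eta}$ then propagates from the uniqueness of $\{q_{h}\}$.

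The main obstacle is the closure step $\left(\ref{71}\right)$, which is genuinely more delicate than in the $\mathbb{Z}^{1}$ case because of the branching factor $\frac{1}{R(v_{i})}$ arising when several neighbors are equidistant from the destination on a general Cayley graph. I would need to argue that the stationary distribution $\{q_{h}\}$ of ${\mathcal{B}}^{\eta}$ correctly encodes this random choice of next hop, i.e.\ that the definition of ${\mathcal{B}}^{\eta}$ already incorporates the $\frac{1}{R}$ splitting so that the transit rate into a given type computed from $\mu_{\eta}$ reproduces $\eta q_{h}$ without double counting. Establishing this $G$-invariant bookkeeping identity, together with checking that the finite-mean-lifetime hypothesis indeed yields positive recurrence rather than mere recurrence of the single-particle chain, is where the real work lies; the remaining ergodicity and balance-equation verifications are routine adaptations of the $\mathbb{Z}^{1}$ argument.
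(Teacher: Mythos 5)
Your proposal follows essentially the same route as the paper: the paper's (very terse) proof likewise builds the single-particle chain as the composition $Q=P_{1}P_{2}$ of the $\beta$-swap displacement over an exponential sojourn of mean $(1-\eta)^{-1}$ with the $\frac{1}{R(s)}$-splitting next-hop map, takes its unique stationary distribution $q$, sets the arrival rates proportional to $\eta q$, feeds them into the ergodic single-server queue to get $\mu_{\eta}$, and closes the fixed-point loop exactly as you describe. The only slip is your formula $\lambda_{h}=\eta q_{h}$: since $\{d_{h}\}$ is given data, the correct bookkeeping is $\nu_{h}+\lambda_{h}=\eta q_{h}$ with $\lambda_{h}=\lambda d_{h}$ and $\lambda=\eta/N(H(\eta))$, which is what you in fact use when you invoke (\ref{64}).
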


\begin{proof}
The proof follows closely that of Theorem \ref{z1}. The random walker
$W\left(  t\right)  $ now lives on $\mathbb{\ }G$. It starts at $e$ and it
makes jumps from each node $v$ to one of the neighboring nodes $\left\{
vg_{1},...,vg_{k},vg_{1}^{-1},...,vg_{k}^{-1}\right\}  $ with rates $\beta.$
The matrix of transition probabilities $P_{1}$ is now defined via that random
walk. Again the corresponding Markov chain on $G$ is not positive recurrent.
Let $P_{2}$ be the Markov chain with transition probabilities
\[
\rho_{st}=\left\{
\begin{array}
[c]{cc}%
\frac{1}{R\left(  s\right)  } & \text{for }t=b_{j}\left(  s\right) \\
0 & \text{in other cases.}%
\end{array}
\right.
\]
Then we consider the composition Markov chain with transition matrix
{$Q=P_{1}P_{2}$.} The rest of the constructions proceeds in the same way.
\end{proof}

\subsection{Extension to non-exponential service times}

Let us describe a simple extension of the results of this section to the case
where the customer service times are i.i.d. with unit mean and bounded second
moment, again at the nodes of Cayley graph. Let us denote by $\xi$ the random
service time of a single customer and by $F_{\xi}$ its distribution function.

{The description of the corresponding NLMP, which will be denoted by
$X_{G,\xi}$, requires of course the extra variables -- namely, the amounts of
service already received by the customers becomes relevant, see \cite{BRS}.}

For all $G$-invariant stationary measures of $X_{G,\xi}$, each server receives
a total inflow which is a homogeneous Poisson point process of rate $\eta$, and
we can again decompose $\eta$ as {$\eta=\sum_{h\in G}\eta_{h}$}, as described
in the previous subsection, with $\eta_{h}$ being the total arrival rate of
customers with destination $h$ at server $e$. Each customer stays at each node
for a stationary sojourn time which is that of a $M/GI/1$ queue with
parameters $\eta$ and $\xi$. Additionally, all customers in this queue change
their locations simultaneously with rate $\beta$ as their server swaps its
position with the adjacent server.

Consider the stochastic process ${\mathfrak{B}}^{\eta,\xi}$ of the random walk
of a single particle over the Cayley graph of $G$ till its exit. This process
is analogous to the process ${\mathfrak{B}}^{\eta}$ in Section \ref{322}. The
only difference between these two processes is that the exponentially
distributed random variable $T$ (stationary sojourn time of the system
$M/M/1$) is replaced by $T_{\eta,\xi}$, the stationary sojourn time in the
queue $M/GI/1$ with i.i.d. service time with distribution $F_{\xi}$ {given by
the Pollaczek-Khinchine formula.}

As in Section \ref{322}, let us require that the triple $(\eta,\xi,\beta)$
ensures finiteness of the mean lifetime of a customer, denoted by $E(\eta
,\xi,\beta)$. For fixed $\xi$ and $\beta$, denote by $\bar{\eta}$ the right
endpoint of the maximal interval of values $\eta$ where the function
$E(\eta,\xi,\beta)$ is finite. Clearly, $\bar{\eta}$ depends continuously on
$\beta$ and, within the interval $[0,\bar{\eta})$, the function $E(\eta
,\xi,\beta)$ is continuous in $\eta$ and $\beta$.

{ }

\begin{theorem}
Consider the NLMP $X_{G,\xi}$. Assume that $E(\eta,\xi,\beta)$ is finite. Let
us fix a probability distribution $d_{h}=\lambda_{h}/\lambda$, $h\in G$, where
$\lambda=\sum_{h}\lambda_{h}.$ Then there is a unique value of $\lambda$ such
that the process $X_{G,\xi}$ has a unique $G$-invariant stationary distribution.
\end{theorem}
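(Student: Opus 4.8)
The plan is to mirror the proof of Theorem \ref{z1} and its Cayley-graph generalization, replacing only the exponentially distributed sojourn time by the $M/GI/1$ sojourn time $T_{\eta,\xi}$, and then to invoke the same composition-of-Markov-chains construction to produce the equilibrium measure. The key structural observation, stated in the subsection, is that the single-particle process $\mathfrak{B}^{\eta,\xi}$ differs from $\mathcal{B}^{\eta}$ only in the distribution of the time spent on each server before a directed jump. Since in any $G$-invariant stationary measure each server sees a homogeneous Poisson inflow of rate $\eta$, the Poisson decomposition $\eta=\sum_{h\in G}\eta_h$ and the splitting of arrivals into independent type-$h$ Poisson streams go through verbatim; the only analytic input that changes is that the per-server holding time is now given by the Pollaczek--Khinchine formula rather than by $(1-\eta)^{-1}$.

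First I would fix $\eta\in(0,\bar\eta)$ and record that, by hypothesis, $E(\eta,\xi,\beta)$ — the mean lifetime of a customer, i.e. the expected number of directed jumps of $\mathfrak{B}^{\eta,\xi}$ weighted appropriately — is finite. Next I would reconstruct the two-chain composition $Q=P_1P_2$ exactly as in the proof of the Cayley-graph theorem: $P_1$ is the transition kernel of the server-swap random walk on $G$ run for one $M/GI/1$ sojourn time $T_{\eta,\xi}$ (this is where $\xi$ enters, through the distribution of $W(T_{\eta,\xi})$), and $P_2$ is the deterministic ``move one step closer to $e$'' kernel $\rho_{st}$ already defined. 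Finiteness of $E(\eta,\xi,\beta)$ guarantees that the composed chain $Q$ is positive recurrent, so it possesses a unique stationary distribution $q=\{q_k\}$, from which I set $\nu_h$ and $\lambda_h$ via the normalization $\sum_{h\in G}(\nu_h+\lambda_h)=\eta$ as in (\ref{15}). Feeding these rates into a single $M/GI/1$ queue subject to the global $\beta$-swap clocks yields, by ergodicity of that queue, a unique stationary measure $\mu_\eta$; the fixed-point equations (the $M/GI/1$ analogue of (\ref{rz})--(\ref{71})) are then satisfied by construction, and the consistency relation (\ref{71}) holds because $q$ is stationary for $Q$. Uniqueness of $\lambda$ follows because, once $\eta$ is fixed, the rates $\nu_h$ are determined uniquely by $q$, hence so is $\lambda=\sum_h\lambda_h$, and then $\mu_\eta$ is the unique invariant law of an ergodic Markov process.

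The main obstacle, and the place where genuine care is required beyond the exponential case, is the state-space enlargement: for a general service distribution the queue is no longer Markov on integer-labelled strings alone, since the residual (or elapsed) service time of the customer in service becomes a relevant supplementary variable. I would handle this by working with the standard supplementary-variable description of $M/GI/1$ (as flagged in the text via the reference to \cite{BRS}), so that $X_{G,\xi}$ remains a genuine Markov process on the enriched state space, and then verify that the single-particle reduction still closes: the directed-jump epochs of the tagged customer are governed solely by the sojourn-time law $T_{\eta,\xi}$, which the Pollaczek--Khinchine formula makes explicit and which depends on the arrival stream only through its total rate $\eta$. Once this independence-from-type is established, the remainder is routine and identical to the exponential case; the continuity of $E(\eta,\xi,\beta)$ in $(\eta,\beta)$ on $[0,\bar\eta)$, already noted, supplies whatever regularity is needed for the uniqueness argument.
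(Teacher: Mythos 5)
Your proposal is correct in substance and sits inside the same overall framework as the paper's proof: reduce to the single-particle random walk ${\mathfrak{B}}^{\eta,\xi}$, use the fact that its per-server holding time is the $M/GI/1$ sojourn time depending on the arrival stream only through $\eta$, recover the rates, and then obtain the stationary measure as a product over nodes of the unique stationary law of an ergodic single-server queue (the paper gets ergodicity from the renewal structure at queue-emptying times, which matches your supplementary-variable remark). Where you diverge is in the bookkeeping device used to pin down $\lambda$ and the rates $\{\nu_h\}$: you re-run the composition chain $Q=P_1P_2$ of Theorem~\ref{z1} and read the rates off its stationary distribution via the normalization of (\ref{15}), whereas the paper works directly with the occupation-measure quantities $k(u,v,w)$ (the mean number of directed jumps of the tagged particle to $v$ from state $(u,w)$) and the identities (\ref{zv2})--(\ref{zv}), whose right-hand sides are \emph{linear in $\lambda$} for fixed $\{d_h\}$ and fixed $\eta$; uniqueness of $\lambda$ is then immediate. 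These are equivalent — the stationary law of $Q$ with re-injection at absorption is proportional to the occupation measure of the killed chain started from $\{d_h\}$ — but note that the literal recipe ``$\lambda=\eta q_0$'' of (\ref{15}) presupposes that exogenous customers arrive with displacement $0$; in the present theorem the destination law is a general $\{d_h\}$, so you must either re-inject the absorbed particle with law $\{d_h\}$ and identify $\lambda$ with $\eta$ divided by the mean number of directed jumps per excursion, or pass to the paper's $k(u,v,w)$ formulation. This is a presentational gap rather than a mathematical one; the paper's linearity argument buys a cleaner uniqueness statement, while your route makes the continuity with Theorem~\ref{z1} and the role of the Pollaczek--Khinchine sojourn time more transparent.
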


\begin{proof}
Let us fix an arbitrary triple $(u,v,w)$ of points of $G$. For the single
particle process {${\mathfrak{B}}^{\eta,\xi}$}, let us denote by $k(u,v,w)$
the mean number of direct jumps to node $v$ (only the jumps of the particle
are counted, not the jumps of servers) of the particle that is located at node
$u$ and has the address $w$. One can easily see that $\eta$ can be decomposed
as follows:
\begin{equation}
\eta=\sum_{u,w}\lambda_{v^{-1}w}k(u,v,w). \label{zv2}%
\end{equation}
Thanks to $G$-invariance, $\eta$ does not depend on $v$.
The right-hand side of this equation is finite as shown when
rewriting $k(u,v,w)$ as $k\left(e,vu^{-1},wu^{-1}\right)$, thanks to G-invariance.
So the right-hand side is finite as for 
${\mathfrak{B}}^{\eta,\xi}$
(the single particle random walk) the mean life time of the particle is finite.

Accordingly, for the $G$-invariant stationary distribution $X_{G,\xi}$ of the
NLMP, the rate of the homogeneous Poisson inflow to node $e$ of particles with
address $h$ can be written as
\begin{equation}
\eta_{h}=\lambda_{h}k(e,e,h)+\sum_{u\neq e}\lambda_{u^{-1}h}k(u,e,h).
\label{zv}%
\end{equation}
Also,
\[
\nu_{h}=\sum_{u\neq e}\lambda_{u^{-1}h}k(u,e,h).
\]
Since for a given distribution $\{d_{h}\}$the sum
\[
\lambda_{h}k(e,e,h)+\sum_{u\neq e}\lambda_{u^{-1}h}k(u,e,h)
\]
is a linear function of $\lambda$, for a given $\eta$, Equation (\ref{zv}) has
a unique solution $\{\lambda_{h},\nu_{h},h\in G\}$.

Hence we have a stationary process of arrival to node $e$ (or any other node)
of independent Poisson flows of different types $k$. The total rate of this
inflow is $\eta<1$ and the total queue at a given server is stationary. The
customers in this queue change their types to $k^{\prime}$ after an
exponentially distributed time, in the same manner as in the case of
exponentially distributed variable $\xi$.

The process which describes the queue is ergodic since it has renewal times
when the queue becomes empty. The product of stationary measures on the queues
over all the nodes is the required $G$-invariant measure for the NLMP
$X_{G,\xi}$.
\end{proof}

\begin{theorem}
If $\lambda>0$ is small enough, the NLMP on a translation invariant infinite
graph has at least two translation invariant equilibria.
\end{theorem}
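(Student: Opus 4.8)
The plan is to combine the existence-and-uniqueness result of the single-particle construction (the Cayley-graph analogue of Theorem~\ref{z1}) with the qualitative behavior of the function $\lambda(\eta)$ at the two ends of the interval $(0,1)$, exactly as in the proof of Proposition~\ref{16}. Fix the group $G$, the destination distribution $\{d_h\}$, and the swap rate $\beta$, chosen small enough that the single-particle walk ${\mathcal{B}}^{\eta}$ has finite mean lifetime for $\eta$ in the relevant range, so that each value of $\eta$ yields a genuine equilibrium $\mu_\eta$ via that theorem. The strategy is then to show that the map $\eta\mapsto\lambda(\eta)=\eta/N(H(\eta))$ is continuous, vanishes at both endpoints $\eta=0$ and $\eta=1$, and is strictly positive in between; from this an intermediate-value argument produces, for every sufficiently small $\lambda>0$, two distinct preimages $\eta_-(\lambda)\to0$ and $\eta_+(\lambda)\to1$, hence two distinct equilibria.

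First I would record, using (\ref{64}), that $\lambda(\eta)=\eta/N(H(\eta))$, where $N(H(\eta))$ is the expected number of directed jumps to absorption. At $\eta\to0$ one has $H(\eta)\to1$ and $N$ bounded below by $N(0)=\mathbb{E}_{\{d_h\}}(\max\{\mathrm{dist}(e,h),1\})>0$, so $\lambda(\eta)\to0$ because the numerator vanishes. Next I would treat the delicate endpoint $\eta\to1$: here $H(\eta)=1/(1-\eta)\to\infty$, so the customer's sojourn time diverges, the server performs the swap random walk for an ever longer time before each directed jump, and the walk wanders far before being absorbed. For an \emph{infinite} translation-invariant graph this forces $N(H(\eta))\to\infty$ (indeed the stated limit $\lim_{\eta\to1}N(H(\eta))=|V|/|F|=\infty$ when $|V|=\infty$), so again $\lambda(\eta)=\eta/N(H(\eta))\to0$. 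Since $\lambda(\eta)$ is continuous on $(0,1)$ and strictly positive there (both $\eta$ and $1/N$ are positive), it attains a positive maximum $\lambda_+$ on any compact subinterval; setting $\lambda_+$ to be the supremum, the equation $\lambda(\eta)=\lambda$ has at least two roots for every $0<\lambda<\lambda_+$.

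To turn the two roots $\eta_\pm(\lambda)$ into two genuinely distinct stationary measures, I would invoke the uniqueness clause of the preceding theorem: for each admissible $\eta$ there is a unique equilibrium $\mu_\eta$ with total load $\eta$, and distinct values of $\eta$ give measures with distinct load factors, hence distinct measures. As $\lambda\to0$ the two roots separate, $\eta_-(\lambda)\to0$ and $\eta_+(\lambda)\to1$, so the two equilibria are not only distinct but become arbitrarily far apart (one nearly empty, one nearly saturated). The main obstacle I anticipate is the rigorous justification of the limit $N(H(\eta))\to\infty$ as $\eta\to1$ on a general infinite graph: one must verify that the diverging sojourn time genuinely forces the directed-jump count to diverge, i.e. that the absorbing set (nodes within distance $1$ of the destination) is visited ever more rarely as the embedded walk spends longer excursions between directed steps. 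This requires controlling the interplay between the swap walk $W(t)$ and the geometry of $G$; the cleanest route is to show that the probability of absorption per directed jump tends to $0$ because the walker's location at a service epoch equidistributes over an expanding region, driving the expected number of jumps to infinity. For the restricted claim of \emph{at least two} equilibria it suffices to establish $\lambda(\eta)\to0$ at both ends together with positivity in the interior, so the intermediate-value step is robust once this endpoint behavior is secured.
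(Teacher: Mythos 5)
Your proposal is correct and follows essentially the same route as the paper: continuity of $\lambda(\eta)$, the vanishing of $\lambda$ at both endpoints of the admissible interval of loads, and an intermediate-value argument yielding two preimages $\eta_-(\lambda)$ and $\eta_+(\lambda)$ for small $\lambda$, each corresponding to a distinct equilibrium by the uniqueness clause. The only cosmetic difference is that the paper phrases the upper endpoint as $\bar{\eta}$ (the right end of the interval where the mean customer lifetime $E(\eta,\xi,\beta)$ is finite, which may be less than $1$) rather than $\eta\to1$, and the divergence of the mean number of visited servers at that endpoint -- the point you rightly flag as the delicate step -- is asserted rather than proved in the paper as well.
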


\begin{proof}
It follows from the continuity of the function $E(\eta,\xi,\beta)$ in $\eta$
and $\beta$ and from the relations (\ref{zv}), (\ref{zv2}), that $\lambda$ is
a continuous function of $\eta$. Moreover, $\lambda\to0$ as $\eta\to0$.
Analogously, $\lambda\to0$ as $\eta\to\bar{\eta}$.
\end{proof}

\section{Comparing ${\mathbb{Z}}^{d}_{K}$ and ${\mathbb{Z}}^{d}$}

In the simplest case $d=1$ we have the following property.

\begin{theorem}
The circle ${\mathbb{Z}}^{1}_{K}$ is faster than the line ${\mathbb{Z}}^{1}$
in terms of absorption times.
\end{theorem}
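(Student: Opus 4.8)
The plan is to reduce the statement to a comparison of two one–dimensional absorption problems and then to exhibit an explicit coupling. First I would recall that, for the single particle process ${\mathcal B}^{\eta}$, the only quantity relevant to absorption is the graph distance $d_t$ between the tagged customer and its destination. On $\mathbb{Z}^{1}$ this distance is exactly the continuous–time birth–death chain on $\{0,1,2,\dots\}\cup\{\ast\}$ written in the computational illustration, with service rate $\gamma=1-\eta$ and swap rate $\beta$: for $n>1$ one has up–rate $\beta$ and down–rate $\beta+\gamma$, while from $n\in\{0,1\}$ a service event (rate $\gamma$) sends the customer to the absorbing state $\ast$. On $C_{K}={\mathbb{Z}}^{1}_{K}$ with $K=2M+1$ odd, the same description holds verbatim for $0\le n\le M-1$; by the left–right symmetry of the circle around the destination the distance is again Markov, and the only change occurs at the maximal distance $M$, where the directed service move still decreases the distance but the server swap that would increase it now lands on the second far node and hence leaves the distance equal to $M$. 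Thus the circle distance $\tilde d_t$ is the same birth–death chain \emph{reflected} at the top: the up–rate at $M$ becomes a self–loop, i.e. is suppressed.

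Then I would construct a coupling of the line chain $d_t$ and the circle chain $\tilde d_t$, started from the same distance $n\le M$, that keeps $\tilde d_t\le d_t$ for all $t$ and, crucially, shares a single service clock. Concretely, I would drive the service/absorption events of both chains by one common Poisson clock of rate $\gamma$, and couple the swaps as follows: whenever $\tilde d_t=d_t=n$ with $n\le M-1$ the two chains have identical swap rates and I let them perform the same swap move; whenever $\tilde d_t=d_t=M$ I synchronise the common service clock together with the common swap–down clock (net down rate $\beta+\gamma$) but give the line an independent swap–up clock of rate $\beta$ that raises $d_t$ to $M+1$ while $\tilde d_t$ stays at $M$, thereby opening a gap of one; whenever $\tilde d_t<d_t$ the two chains run their swap clocks independently. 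Because a single event changes the gap $d_t-\tilde d_t$ by at most one, and the only state from which the gap could become negative is the diagonal, where the synchronised rule never decreases it, the relation $\tilde d_t\le d_t$ is preserved; one checks directly that each marginal has the correct rates, including the special behaviour at distances $0$ (both swaps increase the distance) and $1$.

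The conclusion is then immediate from the shared service clock. Absorption can only occur at a service ring, and at such a ring the line is absorbed iff $d\le1$ and the circle iff $\tilde d\le1$. Since $\tilde d_t\le d_t$ at all times, every service ring that absorbs the line (i.e. finds $d\le1$) also finds $\tilde d\le1$ and hence absorbs the circle at the same instant, while service rings with $\tilde d\le1<d$ absorb the circle strictly earlier. Writing $T$ and $\tilde T$ for the two absorption times, this yields $\tilde T\le T$ almost surely under the coupling, hence the stochastic domination $\tilde T\preceq_{\mathrm{st}}T$ from any common initial distance $n\le M$; in words, ${\mathbb{Z}}^{1}_{K}$ is faster than $\mathbb{Z}^{1}$ in terms of absorption times.

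The main obstacle I anticipate is the boundary bookkeeping rather than any deep estimate. One must verify that sharing the service clock is compatible with maintaining $\tilde d_t\le d_t$ at the three non–generic states — distance $0$, where both server swaps push the customer to distance $1$; distance $1$, where the swap and service events split; and distance $M$, where the circle reflects — so that the construction is genuinely a coupling (correct marginal rates) and never violates the order. It is precisely the fact that absorption is allowed from distance $1$ and not only from distance $0$ that forces the service clock to be common: with independent service clocks the line could be absorbed from $d=1$ before the circle is absorbed from $\tilde d=0$, and the almost–sure domination would fail.
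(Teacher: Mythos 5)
Your argument is correct and rests on the same reduction as the paper's: pass to the one--dimensional distance (birth--death) chain on $\{0,1,\dots\}\cup\{\ast\}$, note that the circle's chain coincides with the line's except at the top state $M=(K-1)/2$, where the upward swap becomes a self-loop, and conclude by coupling. The couplings themselves differ. The paper uses a time-change coupling: the two chains are run identically on $[0,M]$, and whenever the line chain makes an excursion above $M$ the circle chain is simply frozen at $M$ until the line returns; by memorylessness this produces a legitimate version of the circle process, absorption (which can only occur from distances $0$ or $1$) happens at the same event in both trajectories, and $T-\tilde T$ is identified pathwise as the total excursion time spent above $M$. You instead build an order-preserving coupling in real time, sharing the service clock and synchronizing swaps on the diagonal, so that $\tilde d_t\le d_t$ throughout and every service ring that absorbs the line also absorbs the circle. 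Your construction needs the boundary bookkeeping you describe at $0$, $1$ and $M$, which you handle correctly, and your observation that the service clock must be common (otherwise almost-sure domination can fail) is exactly the right point of care; the paper's coupling is shorter and gives slightly more, namely the explicit excursion-time representation of the gap $T-\tilde T$. Both arguments implicitly compare the two single-particle processes at a common value of the parameters $\beta$ and $\gamma=1-\eta$; it would be worth stating that normalization explicitly in either write-up.
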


\begin{proof}
First, we consider ${\mathbb{Z}}^{1}_{+}$ instead of ${\mathbb{Z}}^{1}$. Then
the circle ${\mathbb{Z}}^{1}_{K}$ can be associated with an interval $[0,K/2]$
within $Z^{1}_{+}$. The two processes are the same on this interval apart from
the endpoint $K/2$.

One may say that the process on ${\mathbb{Z}}^{1}_{+}$ is coupled with a
special process on $[0,K/2]$ as follows. As two processes are at $K/2$ and the
``line" process goes further to $K/2+1$, the ``circle" process waits at $K/2$
till the ``line" process returns to $K/2$. Hence, the absorption time may only increase.
\end{proof}

In the general case $d\ge1$ we can only say that in some sense the equilibria
on ${\mathbb{Z}}^{d}_{K}$ are close to the equilibrium on ${\mathbb{Z}}^{d}$
for $K$ large enough. Namely, the following assertion holds.

\begin{theorem}
For each $\eta\in\lbrack0,1]$,
\[
\lim_{K\rightarrow\infty}\lambda_{K}(\eta)=\lambda\left(  \eta\right)  .\text{
}%
\]

\end{theorem}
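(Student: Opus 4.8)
The plan is to show that the absorption-time functional $\lambda_K(\eta)$ for the finite cyclic network converges pointwise to $\lambda(\eta)$ for the line. Recall from equation (\ref{64}) that $\lambda(\eta)=\eta/N(H(\eta))$, where $N(H(\eta))$ is the mean number of directed (service) jumps of the single-particle process $\mathcal{B}^\eta$ until absorption. The analogous quantity $\lambda_K(\eta)=\eta/N_K(H(\eta))$ is built from the single-particle process on the circle $\mathbb{Z}^1_K$, with the same service-time parameter $H=H(\eta)=(1-\eta)^{-1}$. Since $\eta$ and $H$ are held fixed throughout, it suffices to prove that $N_K(H(\eta))\to N(H(\eta))$ as $K\to\infty$, and then invoke continuity of division (the denominators are bounded away from $0$, being at least $1$). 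So the whole statement reduces to a statement about the single-particle random walks: the mean number of directed jumps to absorption on the circle converges to that on the line.

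First I would set up the coupling already introduced in the previous theorem (the one asserting the circle is faster than the line). Using the symmetry of the destination distribution, I would reduce to tracking the distance process on $\mathbb{Z}^1_+$ versus its restriction to $[0,\lfloor K/2\rfloor]$ with the reflecting/waiting modification at the far endpoint $K/2$. On the common region the two distance processes evolve identically; they can only differ once the line process exceeds $K/2$. The key observation is that, with the service clock of rate $\gamma=1-\eta$ competing against the swap clock of rate $\beta$, the distance process has a strict negative drift toward the origin whenever it is at distance $>1$ (compare the computational illustration: $q(n,n-1)=\beta+\gamma>\beta=q(n,n+1)$). Hence the line process reaches large distances only with exponentially small probability, and the event that the coupling ever ``notices'' the boundary at $K/2$ has probability tending to $0$ as $K\to\infty$.

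Next I would turn this into convergence of expectations rather than just convergence in probability. Let $A$ denote the event that the line walk reaches distance $K/2$ before absorption. On the complement $A^c$ the number of directed jumps on the circle and on the line agree exactly, by the coupling. On $A$ the two counts may differ, but both the probability $\mathbb{P}(A)$ and the conditional tail of the number of jumps decay geometrically because of the drift. The clean way to write this is $|N_K(H)-N(H)|\le \mathbb{E}\big[\,|\text{(circle count)}-\text{(line count)}|\,\mathbb{I}_A\,\big]$, and then bound each count on $A$ by a geometrically integrable random variable (the number of jumps of a positive-recurrent walk with uniform negative drift has exponential tails, uniformly in $K$). A Cauchy–Schwarz or direct tail estimate then gives a bound of the form $\mathbb{P}(A)^{1/2}\cdot C$, with $\mathbb{P}(A)\le c\,r^{K/2}$ for some $r<1$, which tends to $0$.

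The main obstacle I anticipate is the uniform integrability step: one must ensure that the number of directed jumps to absorption has tails that are exponentially small \emph{uniformly in $K$}, so that the contribution of the rare event $A$ genuinely vanishes in expectation and not merely in probability. This is where the strict drift $\gamma>0$ (equivalently $\eta<1$) is essential; for $\eta$ close to $1$ the drift weakens and the constants degrade, so I would keep $\eta$ fixed in $(0,1)$ and make the geometric tail bound explicit in terms of the ratio $\beta/(\beta+\gamma)$. Once this uniform control is in hand, the convergence $N_K(H(\eta))\to N(H(\eta))$, and hence $\lambda_K(\eta)\to\lambda(\eta)$, follows immediately. The endpoint cases $\eta=0$ and $\eta=1$ can be handled separately: at $\eta=0$ both sides are $0$, and at $\eta=1$ one checks directly that $N(H(\eta))\to\infty$ on the line while $N_K\to K/|F|=K/2$, consistent with $\lambda(1)=0$ on the infinite line.
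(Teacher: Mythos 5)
The paper gives no proof of this statement (it says only ``We omit the technical proof''), so there is nothing to compare against line by line; I can only assess your argument on its own terms. Your reduction is the right one: since the per-node sojourn time in equilibrium is $M/M/1$ with the same mean $H(\eta)=(1-\eta)^{-1}$ on both graphs, $\lambda_K(\eta)=\eta/N_K(H(\eta))$ and $\lambda(\eta)=\eta/N(H(\eta))$, the denominators are $\geq 1$, and everything comes down to $N_K\to N$ for the single-particle walk. For $d=1$ your coupling (fold the circle onto $[0,K/2]\subset\mathbb{Z}^1_+$, let the two distance processes agree until the line walk exceeds $K/2$), the gambler's-ruin bound $\mathbb{P}(A)\leq c\,r^{K/2}$ coming from the strict down-bias $\beta+\gamma>\beta$, and the Cauchy--Schwarz step with a $K$-uniform exponential tail for the jump count (which follows because the walk is absorbed at rate $\gamma$ from $\{0,1\}$ and returns there geometrically fast) together give a correct and essentially complete proof. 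Your treatment of the endpoints $\eta=0$ and $\eta=1$ is also consistent with the paper's normalization $\lambda(1)=|F|/|V|$.

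The genuine gap is dimensional. The theorem sits in the section ``Comparing $\mathbb{Z}^d_K$ and $\mathbb{Z}^d$'' and is asserted for all $d\geq 1$; only the preceding ``circle is faster than the line'' theorem is restricted to $d=1$. Every quantitative step of your argument is one-dimensional: the identification of the circle with an interval of $\mathbb{Z}^1_+$ with a single reflecting endpoint, the fact that the distance-to-destination process is an autonomous birth--death chain, and the resulting explicit geometric hitting estimate. In $d\geq 2$ the graph distance to the destination under server swaps is not a Markov birth--death chain (the probability that a swap decreases the $\ell^1$ distance depends on the full position, not just the distance), and under swaps alone the distance can even have locally non-negative drift (e.g.\ when a coordinate vanishes), so the negative drift must be extracted from the rate-$\gamma$ directed jumps via a Lyapunov-function or large-deviation argument rather than read off the one-step rates. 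Likewise the torus $\mathbb{Z}^d_K$ does not fold onto a sub-interval of $\mathbb{Z}^d$ with a single boundary point, so the monotone ``waiting'' coupling needs to be replaced by a coupling that agrees until the walk leaves a box of radius $cK$, with $\mathbb{P}(A)\to 0$ established by controlling the maximum of the swap-driven random walk over the (exponentially integrable) absorption time. The architecture of your proof survives these replacements, but as written it only proves the case $d=1$.
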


We omit the technical proof.

{ }

\section{Conclusions}

{ Let us conclude with a few observations on the physical meaning of our
models and their connections to earlier models of the networking literature. }

\subsection{Relationship between the models}

Let us first discuss the relationships between the replica 1 model, the
replica $N$ model and the replica $\infty$ model.

{ The replica-1 model and the replica-$N$ model describe two different
physical systems as illustrated by the following wireless communication
network setting: The replica-$N$ model features a system with $N$ frequency
bands and where each device has $N$ radios, one per frequency band, that it
can use simultaneously, for both transmission and reception. It then
implements a set of $N$ virtual FIFO queues of packets, one per band. At any
time, a device transmits the packet head of the queue to the appropriate
neighboring device (the one closest to packet destination) on that band. Upon
reception, the packet is loaded in one of the $N$ queues of the receiver
chosen at random. In this sense, the replica-$N$ model is \emph{implementable}%
, and the $1$-replica method is just the special case with one frequency band.
}

{ Let us now explain in what sense the replica-$\infty$ model (which is
computational but non implementable) tells us something on the replica-$N$
model (which is implementable but not computational), in spite of the fact
that the latter is always unstable whereas the former admits stationary
regimes: the replica-$N$ network has a metastable state, which lives for
longer and longer times as $N$ grows, and which is well described by the
minimal solution of the replica-$\infty$ model. }

 {
\subsection{Relationship with the Gupta and Kumar models}
}  {
Our setting is close to
that of Gupta and Kumar \cite{GK}.
The latter also involves a network with $N$
nodes using hop by hop relaying through nearest
neighbors with traffic from every node to a node
chosen at random among all others. The initial model has no node mobility.
The main finding of \cite{GK} is that the
arrival rate that can be sustained on any such node is order $1/\sqrt{N}$
when nodes are located in the Euclidean plane.
Indeed, since nearest neighbors are at distance $1/\sqrt{N}$,
if packet destination is at distance order 1, then every packet has to
travel order $\sqrt{N}$ hops, so that every node
has to relay a total flow of order $\lambda \sqrt{N}$
(as seen by analyzing the load brought per queue). Hence the
capacity (maximum value of $\lambda$ for which the system is stable) is
order $1/\sqrt{N}$.
}

 {
The case with mobility was then considered by
\cite{GT} and \cite{Shah}.
The main finding is that the above scaling of the capacity
is in fact {\em improved} by node mobility. At first glance,
our result suggests that mobility does worse
than absence of mobility (for instance, in the setting of the
present paper, where packet destination at distance order 1 for
graph distance,
the static network has a stability region order 1, whereas
the mobile one has a stability region that tends to 0 when $N$
tends to infinity).
}

 {
Let us explain why there is no contradiction in fact:
our model can be extended to the setting
where the initial distance to destination is order $\sqrt{N}$
for graph distance (rather
than 0 or order 1), which will then be comparable in terms of load to that of
the Gupta and Kumar setting in the Euclidean plane.
In the absence of mobility, our model has a stability region
of order $1/\sqrt{N}$, as in Gupta and Kumar, whereas
in the presence of mobility, as shown by our analysis, a customer
brings a load to order $N$ queues, which is much worse than in
the case without mobility. The reason lies in the use of the
FIFO discipline as a constraint. If the scheduling in queues were more
adaptive than FIFO, we could do exactly as \cite{GT}, namely
keep the packets in their arrival station and wait for this station to
be close to a packet destination to schedule the latter.
Then the load per queue is order 1. Of course,
delay is terrible (return to 0 of a random walk) as in \cite{GT}. In conclusion,
there is no contradiction with this literature.
}

 {
These connections lead to the following observations:
\begin{enumerate}
\item Our model offers a new microscopic view of
this class of problems which complements both the Gupta and Kumar \cite{GK}
and the Grossglauser and Tse scaling \cite{GT} laws.
This microscopic view, describes what happens in a typical
queue, and opens a new quantitative line of thoughts (through mean fields)
for this class of problems.
\item Our model does not sacrifice delays (like
Grossglauser and Tse)
and finds one possible compromise between delay and rate in line with
the ideas discussed by \cite{Shah}.
A study of adaptive scheduling (e.g. priority to customers close to destination)
might lead to innovative solutions to this class of questions.
\end{enumerate}}

\section{Appendix}


\subsection{Proof of Lemma \ref{lem:tor}}

Again we partition the group $S_{KL}$ into cosets. Each coset $A_{\pi}$ has
now $K\left(  K-1\right)  L\left(  L-1\right)  $ elements. With every
permutation $\pi$ of the $KL$ points of the discrete torus $\mathcal{T}_{KL}$
we first include in $A_{\pi}$ all its `2D cyclic moves', i.e. permutation
$\pi$ followed by an arbitrary shift of $\mathcal{T}_{KL}$; there are $KL$ of
them. Also, with each permutation $\tilde{\pi}$ from $A_{\pi}$ we include in
$A_{\pi}$ all $\left(  K-1\right)  \left(  L-1\right)  $ permutations which
are obtained from $\tilde{\pi}$ by performing a pair of `independent
restricted cyclic moves': one of them cyclically permutes all the sites of the
meridian of the point $\pi\left(  0,0\right)  ,$ and another cyclically
permutes all the sites belonging to the parallel of the point $\pi\left(
0,0\right)  .$ There are $\left(  K-1\right)  \left(  L-1\right)  $ such
independent restricted cyclic moves. All other $\left(  K-1\right)  \left(
L-1\right)  $ points of the torus $\mathcal{T}_{KL},$ as well as the point
$\pi\left(  0,0\right)  ,$ stay fixed during these independent restricted
cyclic moves. The idea behind this definition is to ensure the following
property: suppose we know for the permutation $\pi$ that a point $\pi\left(
k^{\prime},l^{\prime}\right)  $ belongs to the parallel of the point
$\pi\left(  0,0\right)  ,$ while the point $\pi\left(  k^{\prime\prime
},l^{\prime\prime}\right)  $ belongs to the meridian of $\pi\left(
0,0\right)  .$ Then for any three different points $a,b^{\prime}%
,b^{\prime\prime}\in\mathcal{T}_{KL},$ such that $b^{\prime}$ belongs to the
parallel of $a,$ and $b^{\prime\prime}$ belongs to the meridian of $a,$ there
exists exactly one permutation $\bar{\pi}\in A_{\pi},$ such that $\bar{\pi
}\left(  0,0\right)  =a,$ $\bar{\pi}\left(  k^{\prime},l^{\prime}\right)
=b^{\prime}$ and $\bar{\pi}\left(  k^{\prime\prime},l^{\prime\prime}\right)
=b^{\prime\prime}.$

Now we fix one class $A_{\pi},$ and compute the number of transits to node
$\bar{\pi}\left(  0,0\right)  $ for all $\bar{\pi}\in A_{\pi}.$ Without loss
of generality and in order to simplify the notations we consider only the case
when $\pi$ is the identity $e\in S_{KL}.$ We denote the permutations from
$A_{e}$ by the letter $\varkappa.$

Clearly, the transits from node $\varkappa\left(  k,l\right)  $ to
$\varkappa\left(  0,0\right)  $ can happen only if either $k$ or $l$ are $0.$
So let us fix some integer $k\in\left\{  -\frac{K-1}{2},...,\frac{K-1}%
{2}\right\}  ,$ $k\neq0,$ and let us count the number of possible transits
from $\varkappa\left(  k,0\right)  $ to $\varkappa\left(  0,0\right)  $ while
$\varkappa$ runs over $A_{e}.$ Without loss of generality we can assume that
the destination $D\left(  k,0\right)  =\left(  0,0\right)  .$

As we said already, as $\varkappa$ runs over $A_{e},$ the node $\varkappa
\left(  0,0\right)  $ can be anywhere on the torus $\mathcal{T}_{KL}.$ The
node $\varkappa\left(  k,0\right)  $ can be then anywhere on the parallel of
$\varkappa\left(  0,0\right)  .$ If $\varkappa\left(  0,0\right)  =\left(
0,0\right)  \left(  =D\left(  k,0\right)  \right)  ,$ then no transit to
$\varkappa\left(  0,0\right)  $ can happen, independently of the location of
$\varkappa\left(  k,0\right)  .$ The same is true when $\varkappa\left(
0,0\right)  =\left(  x,y\right)  $ with $x=\pm\frac{K-1}{2},$ $-\frac{L-1}%
{2}\leq y\leq\frac{L-1}{2}.$

For every of the $\left(  K-3\right)  $ locations $\left(  x,0\right)
\in\mathcal{T}_{KL}$ of the node $\varkappa\left(  0,0\right)  $ -- namely,
for $x=-\frac{K-1}{2}+1,...,-1,+1,+2,...\frac{K-1}{2}-1$ we have one transit
per location (or, more precisely, $L-1$ transits per location, due to the
restricted cyclic moves along the meridian).

For every of the location $\left(  0,y\right)  $ of the node $\varkappa\left(
0,0\right)  $ -- namely, for $y=-\frac{L-1}{2},...,-1,+1,+2,...,\frac{L-1}{2}$
we have $2\times\frac{1}{2}=1$ transits per location (or, again more
precisely, $L-1$ transits per location), since there can be two transit
events, each with probability $\frac{1}{2}.$

For any other remaining location of the node $\varkappa\left(  0,0\right)  $
-- and there are $\left(  K-3\right)  \left(  L-1\right)  $ of them, we get
$\frac{1}{2}$ of transit per location (more precisely, $\frac{L-1}{2}$
transits per location).

Summarizing, we have totally $\left[  \left(  K-3\right)  +\left(  L-1\right)
+\frac{1}{2}\left(  K-3\right)  \left(  L-1\right)  \right]  \left(
L-1\right)  $ transits from the node $\varkappa\left(  k,0\right)  $ to the
node $\varkappa\left(  0,0\right)  $, as $\varkappa$ runs over $A_{e}.$ And
there are $\left(  K-1\right)  $ such nodes.

All in all, we have%
\begin{align*}
&  \left[  \left(  K-3\right)  +\left(  L-1\right)  +\frac{1}{2}\left(
K-3\right)  \left(  L-1\right)  \right]  \left(  K-1\right)  \left(
L-1\right) \\
&  +\left[  \left(  L-3\right)  +\left(  K-1\right)  +\frac{1}{2}\left(
L-3\right)  \left(  K-1\right)  \right]  \left(  L-1\right)  \left(
K-1\right)
\end{align*}
transits, so the probability in question is given by%
\begin{align*}
&  \frac{\left(  K-3\right)  +\left(  L-1\right)  +\frac{1}{2}\left(
K-3\right)  \left(  L-1\right)  +\left(  L-3\right)  +\left(  K-1\right)
+\frac{1}{2}\left(  L-3\right)  \left(  K-1\right)  }{KL}\\
&  =\frac{3\left(  K-3\right)  +3\left(  L-3\right)  +\left(  K-3\right)
\left(  L-3\right)  +4}{KL}\\
&  =\frac{3K+3L+\left(  K-3\right)  \left(  L-3\right)  -14}{KL}\\
&  =\frac{KL-5}{KL}.
\end{align*}

\subsection{Proof of Theorem \ref{thmfu}}

Using the results (and notation) of \cite{BRS}, we get that the NLMP is the
evolution of the measure $\otimes\mu_{v}$ on the states (queues $q_{v}$) of
the servers at the nodes $v\in\mathbb{Z}^{1},$ given by the equations%

\begin{equation}
\frac{d}{dt}\mu_{v}\left(  q_{v},t\right)  =\mathcal{A}+\mathcal{B}%
+\mathcal{C}+\mathcal{D}+\mathcal{E} \label{001}%
\end{equation}
with
\begin{equation}
\mathcal{A}=-\frac{d}{dr_{i^{\ast}\left(  q_{v}\right)  }\left(  q_{v}\right)
}\mu_{v}\left(  q_{v},t\right)  \label{06}%
\end{equation}
the derivative along the direction $r\left(  q_{v}\right)  $ (in our case,
since we assume exponential service times with rate 1, we have $\frac
{d}{dr_{i^{\ast}\left(  q_{v}\right)  }\left(  q_{v}\right)  }\mu_{v}\left(
q_{v},t\right)  =\mu_{v}\left(  q_{v},t\right)  $),
\begin{equation}
\mathcal{B}=\delta\left(  0,\tau\left(  e\left(  q_{v}\right)  \right)
\right)  \mu_{v}\left(  q_{v}\ominus e\left(  q_{v}\right)  ,t\right)  \left[
\sigma_{tr}\left(  q_{v}\ominus e\left(  q_{v}\right)  ,q_{v}\right)
+\sigma_{e}\left(  q_{v}\ominus e\left(  q_{v}\right)  ,q_{v}\right)  \right]
\label{201}%
\end{equation}
where $q_{v}$ is created from $q_{v}\ominus e\left(  q_{v}\right)  $ by the
arrival of $e\left(  q_{v}\right)  $ from $v^{\prime}$, and $\delta\left(
0,\tau\left(  e\left(  q_{v}\right)  \right)  \right)  $ takes into account
the fact that if the last customer $e\left(  q_{v}\right)  $ has already
received some amount of service, then he cannot arrive from the outside;
\begin{equation}
\mathcal{C}=-\mu_{v}\left(  q_{v},t\right)  \sum_{q_{v}^{\prime}}\left[
\sigma_{tr}\left(  q_{v},q_{v}^{\prime}\right)  +\sigma_{e}\left(  q_{v}%
,q_{v}^{\prime}\right)  \right]  ,
\end{equation}
which corresponds to changes in queue $q_{v}$ due to customers arriving from
other servers and from the outside (in the notations of $\left(
\ref{21}\right)  ,$ $\sigma_{e}\left(  q_{v},q^{v}\oplus w\right)
=\lambda_{v,w}$);
\begin{equation}
\mathcal{D}=\int_{q_{v}^{\prime}:q_{v}^{\prime}\ominus C\left(  q_{v}^{\prime
}\right)  =q_{v}}d\mu_{v}\left(  q_{v}^{\prime},t\right)  \sigma_{f}\left(
q_{v}^{\prime},q_{v}^{\prime}\ominus C\left(  q_{v}^{\prime}\right)  \right)
-\mu_{v}\left(  q_{v},t\right)  \sigma_{f}\left(  q_{v},q_{v}\ominus C\left(
q_{v}\right)  \right)  ,
\end{equation}
where the first term describes the situation where the queue $q_{v}$ arises
after a customer was served in a queue $q_{v}^{\prime}$ (longer by one
customer), and $q_{v}^{\prime}\ominus C\left(  q_{v}^{\prime}\right)  =q_{v},$
while the second term describes the completion of service of a customer in
$q_{v}$;
\begin{equation}
\mathcal{E}=\sum_{v^{\prime}\text{n.n.}v}\beta_{vv^{\prime}}\left[
\mu_{v^{\prime}}\left(  q_{v},t\right)  -\mu_{v}\left(  q_{v},t\right)
\right]  , \label{202}%
\end{equation}
where the $\beta$-s are the rates of exchange of the servers.

For the convenience of the reader we repeat the equation $\left(
\ref{001}-\ref{202}\right)  $ once more:
\begin{align}
&  \frac{d}{dt}\mu_{v}\left(  q_{v},t\right)  =-\frac{d}{dr_{i^{\ast}\left(
q_{v}\right)  }\left(  q_{v}\right)  }\mu_{v}\left(  q_{v},t\right)
\nonumber\\
&  +\delta\left(  0,\tau\left(  e\left(  q_{v}\right)  \right)  \right)
\mu_{v}\left(  q_{v}\ominus e\left(  q_{v}\right)  \right)  \left[
\sigma_{tr}\left(  q_{v}\ominus e\left(  q_{v}\right)  ,q_{v}\right)
+\sigma_{e}\left(  q_{v}\ominus e\left(  q_{v}\right)  ,q_{v}\right)  \right]
\nonumber\\
&  -\mu_{v}\left(  q_{v},t\right)  \sum_{q_{v}^{\prime}}\left[  \sigma
_{tr}\left(  q_{v},q_{v}^{\prime}\right)  +\sigma_{e}\left(  q_{v}%
,q_{v}^{\prime}\right)  \right]  +\int_{q_{v}^{\prime}:q_{v}^{\prime}\ominus
C\left(  q_{v}^{\prime}\right)  =q_{v}}d\mu_{v}\left(  q_{v}^{\prime}\right)
\sigma_{f}\left(  q_{v}^{\prime},q_{v}^{\prime}\ominus C\left(  q_{v}^{\prime
}\right)  \right) \label{233}\\
&  -\mu_{v}\left(  q_{v}\right)  \sigma_{f}\left(  q_{v},q_{v}\ominus C\left(
q_{v}\right)  \right)  +\sum_{v^{\prime}\text{n.n.}v}\beta_{vv^{\prime}%
}\left[  \mu_{v^{\prime}}\left(  q_{v}\right)  -\mu_{v}\left(  q_{v}\right)
\right]  ~.\nonumber
\end{align}
Compared to the setting of \cite{BRS}, we have the following simplifications:

\begin{enumerate}
\item The graph $G$ is the lattice $\mathbb{Z}^{1};$

\item All customers have the same class;

\item The service time distribution $\eta$ is exponential, with the mean value
$1;$

\item The service discipline is FIFO;

\item The exogenous customer $c$ arriving to node $v$ has for destination the
same node $v$; inflow rates at all nodes are equal to $\lambda;$

\item The two servers at $v,v^{\prime},$ which are neighbors in $\mathbb{Z}%
^{1}$ exchange their positions with the same rate $\beta\equiv\beta
_{vv^{\prime}};$
\end{enumerate}

The equation for the fixed point then becomes:%
\begin{align*}
&  0=\mu_{v}\left(  q_{v}\ominus e\left(  q_{v}\right)  \right)  \left[
\sigma_{tr}\left(  q_{v}\ominus e\left(  q_{v}\right)  ,q_{v}\right)
+\sigma_{e}\left(  q_{v}\ominus e\left(  q_{v}\right)  ,q_{v}\right)  \right]
\\
&  -\mu_{v}\left(  q_{v}\right)  \sum_{q_{v}^{\prime}}\left[  \sigma
_{tr}\left(  q_{v},q_{v}^{\prime}\right)  +\lambda\right]  + \sum
_{q_{v}^{\prime}:q_{v}^{\prime}\ominus C\left(  q_{v}^{\prime}\right)  =q_{v}%
}\mu_{v}\left(  q_{v}^{\prime}\right) \\
&  -\mu_{v}\left(  q_{v}\right)  \mathbb{I}_{q_{v}\neq\varnothing}%
+\sum_{v^{\prime}=v\pm1}\beta\left[  \mu_{v^{\prime}}\left(  q_{v}\right)
-\mu_{v}\left(  q_{v}\right)  \right]  ~.
\end{align*}
The proof is concluded when using the fact that queue $q_{v}$ can in this
setting be identified with the sequence of destinations $D\left(
c_{i}\right)  $ of its customers.

\end{document}